\date{Novembre 2024}
\title{Smoothing low-dimensional algebraic cycles}
\author{Olivier Benoist}
\address{DMA, \'ENS et CNRS, \\
45 rue d'Ulm, \\
75005 Paris, FRANCE}
\email{olivier.benoist@ens.fr}
\numberwithin{equation}{section}
\theoremstyle{plain}
\newtheorem{thm}[equation]{Theorem}
\newtheorem{lem}[equation]{Lemma}
\newtheorem{que}[equation]{Question}
\newtheorem{pro}[equation]{Proposition}
\theoremstyle{definition}
\newtheorem{de}[equation]{Definition}
\theoremstyle{remark}
\newtheorem{Step}{Step}
\newcommand{\arxiv}[1]{\href{https://arxiv.org/abs/#1}{arXiv:#1}}
\newcommand{\degr}{\mathrm{deg}}
\newcommand{\rk}{\mathrm{rk}}
\newcommand{\Pic}{\mathrm{Pic}}
\newcommand{\Hdg}{\mathrm{Hdg}}
\newcommand{\CH}{\mathrm{CH}}
\newcommand{\KV}{\mathrm{KV}}
\newcommand{\bP}{{\mathbb P}}
\newcommand{\Z}{{\mathbb Z}}
\newcommand{\Q}{{\mathbb Q}}
\newcommand{\R}{{\mathbb R}}
\newcommand{\C}{{\mathbb C}}
\newcommand{\ci}{\mathcal{C}^{\infty}}
\newcommand{\tZ}{{\widetilde Z}}
\newcommand{\sL}{{\mathcal L}}
\newcommand{\sF}{{\mathcal F}}
\newcommand{\sE}{{\mathcal E}}
\newcommand{\sO}{{\mathcal O}}
\begin{document}

\maketitle

\section*{Introduction}

\subsection{The smoothing problem for algebraic cycles}

Let $k$ be a field of characteristic $0$ (which the reader may choose to be the field~$\C$ of complex numbers).
Let $X$ be a smooth projective algebraic variety of dimension $n$ over~$k$.
A central theme in algebraic geometry is the study of algebraic cycles on $X$, that is of the 
collection of all the algebraic subvarieties of $X$. We will always denote by~$d$ the dimension of the subvarieties we consider.

The main players of this line of research are the Chow groups $\CH_d(X)=\CH^{n-d}(X)$ 
of~$X$. Their elements are \textit{cycles}:
linear combinations with integral coefficients of (closed, integral) $d$\nobreakdash-dimensional algebraic subvarieties of $X$,  considered up to \textit{rational equivalence}. Here, two cycles are said to be rationally equivalent if both belong to the same algebraic family of cycles on $X$ parametrized by the projective line~$\bP^1_{k}$.
We refer to~\cite{Fulton} for precise definitions,  as well as for a study of functorial properties of Chow groups.

One should think of these groups as algebro-geometric analogues of singular homology groups, where both the generators and the relations are constrained to have an algebraic (as opposed to topological) origin.  No conditions are imposed on the algebraic subvarieties of $X$ that generate its Chow groups; in particular, they may carry arbitrary singularities. The next question, first asked
by Borel and Haefliger
 in \mbox{\cite[\S 5.17]{BH}}, is therefore of interest.

\begin{que}
\label{qBH}
Let $X$ be a smooth projective algebraic variety over $k$.  Are the Chow groups $\CH_*(X)$ of $X$ generated by classes of smooth subvarieties of $X$?
\end{que}

In other words, is it possible to smooth algebraic cycles up to rational equivalence? To be precise, the original question of Borel and Haefliger was slightly weaker.  They considered,  over the field $k=\C$, the coarser \textit{homological equivalence} relation, where two algebraic cycles are identified if they have the same image by the cycle class map~$\CH_d(X)\to H_{2d}(X,\Z)$. 

One of the goals of this introduction is to review the state of the art concerning Question \ref{qBH} (see~\S\ref{parpos} for positive results and \S\ref{parnon} for negative results).

\subsection{Smoothing algebraic cycles in the Whitney range}
\label{parWhitney}

Let us state right away the main theorem presented in this survey, which has been recently proved by Koll\'ar and Voisin  \cite[Theorem 1.2]{KV}.

\begin{thm}
\label{thKV}
Let $X$ be a smooth projective algebraic variety of dimension~$n$ over~$k$. If $d<\frac{n}{2}$, then $\CH_d(X)$ is generated by classes of smooth subvarieties of~$X$.
\end{thm}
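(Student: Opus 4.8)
The plan is to reduce to smoothing a single integral subvariety, pass to a resolution of singularities, and then smooth by a general-position argument, the Whitney inequality $2d<n$ being precisely what makes the governing dimension count succeed. Since $\CH_d(X)$ is generated by classes of integral $d$-dimensional subvarieties, it suffices to show that for each such $Z\subset X$ the class $[Z]$ lies in the subgroup generated by smooth subvarieties. I would choose a resolution $\mu\colon\tZ\to Z$ (available in characteristic $0$), so that $\tZ$ is smooth projective of dimension $d$ and $f:=(Z\hookrightarrow X)\circ\mu\colon\tZ\to X$ is birational onto its image. Fixing a very ample line bundle on $\tZ$ and combining the resulting embedding with $f$, one obtains a closed immersion $\tZ\hookrightarrow X\times\bP^M$ for $M\gg0$ whose image $W$ is a smooth $d$-dimensional subvariety with $p_{X*}[W]=f_*[\tZ]=[Z]$, where $p_X\colon X\times\bP^M\to X$ is the projection.

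The difficulty is that $p_X(W)=Z$ is still singular, and this cannot be cured by naive deformation: the map $f$ may be infinitesimally rigid (i.e. $H^0(\tZ,f^*T_X)$ can vanish), while linear projections in the $\bP^M$-direction move $W$ out of $X\times\bP^M$ and so do not produce a new subvariety of $X$. One is therefore forced to build a genuine rational equivalence between $[Z]$ and a smooth class rather than to deform $f$. The engine is the Whitney embedding philosophy: a general map from a smooth $d$-fold into a smooth $n$-fold has a double-point and pinch-point locus of expected dimension $2d-n$, which is negative exactly when $d<\tfrac{n}{2}$, so a general such map is an embedding. Working with $X\subset\bP^N$ via a very ample system and with $W\subset X\times\bP^M\subset\bP^{N'}$ (Segre), I would consider the algebraic family of cycles $\{Z_b\}_{b\in B}$ on $X$ obtained by moving $W$ through a rationally connected parameter space $B$ (built from Grassmannians of projection centres and $\mathrm{PGL}$-orbits), all members being rationally equivalent to $[Z]$; the content of the argument is that the general member $Z_b$ is smooth.

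The main obstacle is precisely this dimension count on the locus where $f$ fails to be an immersion, namely over the exceptional locus of $\mu$. There the fibre product $W\times_X W$ can be large — for the cone over a smooth $(d-1)$-fold the whole exceptional divisor collapses, producing a component of dimension $2d-2$ — so the secant estimate $\dim\mathrm{Sec}(W)\le 2d+1$ is sharp and the inequality $2d<n$ is used to its full strength. The delicate points are to bound these self-intersection and tangency loci uniformly, to guarantee that the family $B$ is rational so that the rational equivalence of every $Z_b$ with $[Z]$ is automatic, and, most importantly, to arrange that the smooth general member genuinely lies inside $X$ rather than merely in an ambient projective space. The Whitney range $d<\tfrac{n}{2}$ is exactly the threshold at which this smoothing-by-general-position goes through.
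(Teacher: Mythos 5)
Your reduction to a single integral $Z$, the resolution $\tZ\to Z$, and the diagonal embedding of $\tZ$ into $X\times\bP^M$ are all correct, and this is indeed how both Hironaka's argument and the Koll\'ar--Voisin proof begin. But there is a genuine gap at the central step: the family $\{Z_b\}_{b\in B}$ is never constructed, and the construction you suggest cannot produce it. Automorphisms of the form $\mathrm{id}_X\times g$ with $g\in\mathrm{PGL}_{M+1}$ commute with the projection $p_X$, so moving $W$ by them leaves the image $p_X(W)=Z$ unchanged; composing further with automorphisms of $X$ only replaces $Z$ by an isomorphic, hence still singular, copy; and projections from centres in the ambient $\bP^{N'}$ do not preserve $X\times\bP^M$, as you yourself note. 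More fundamentally, a fixed $d$-dimensional subvariety $W\subset X\times\bP^M$ has no reason to move in any large algebraic family at all, let alone one whose general member projects isomorphically onto a smooth subvariety of $X$: producing such a family is exactly the problem to be solved, not an input one can invoke. The Whitney inequality $d<\frac{n}{2}$ only pays off once a genuinely moving family exists; it does not create one. Note also that the good dimension count fails for your map $f:\tZ\to X$ itself: its double-point set is governed by $\tZ\times_X\tZ=\tZ\times_Z\tZ$, whose exceptional components can have dimension as large as $2d-2$, far above the bound $2\dim(\tZ)-n$ that the argument needs.

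What fills this gap in the paper is the combination of Theorems \ref{th1} and \ref{th2}. The smoothing step (Proposition \ref{prosmoothing}) applies not to an arbitrary cycle but to an intersection $D_1\cap\dots\cap D_c$ of \emph{general members of very ample linear systems} on a smooth variety $Y$ equipped with a \emph{flat} morphism $f:Y\to X$: the very ample systems supply the moving family, and flatness is precisely what bounds the locus $\{(y_1,y_2): y_1\neq y_2,\ f(y_1)=f(y_2)\}$ by $2\dim(Y)-n$, so that imposing $y_1,y_2\in D_1\cap\dots\cap D_c$ (codimension $2c$) kills it when $d<\frac{n}{2}$. The truly hard part, and the main novelty of \cite{KV}, is then Theorem \ref{th2}: every class in $\CH_d(X)$ is such a flat pushforward of an intersection of divisor classes. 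Here $Y=X\times\bP^M$ alone cannot suffice: since $\Pic(X\times\bP^M)=\Pic(X)\times\Pic(\bP^M)$, pushing forward products of divisor classes from $X\times\bP^M$ yields only products of divisor classes on $X$. One needs the iterated blow-ups along smooth complete bundle sections (Propositions \ref{pro1} and \ref{pro2}) to manufacture flat varieties $Y\to X$ on which the given cycle becomes a complete intersection. Absent this structural input, the closest rigorous realization of your moving step is Hironaka's moving-by-linkage, which is known to work only for $d\leq 3$ because linkage creates singularities in codimension $4$.
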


At the time when Borel and Haefliger asked Question \ref{qBH},  resolution of singularities was not available.  A positive answer to this question could therefore have been used as a substitute to resolution of singularities in the study of algebraic cycles.  This original motivation has nowadays disappeared.  On the contrary,  the proof of Theorem~\ref{thKV} does use Hironaka's theorem~\cite{Hironaka} on resolution of singularities (exactly once, in the proof of Theorem \ref{th2'} given at the end of \S\ref{flatci}).  

It is verified in \cite[Theorem 39]{Kollar} that the proof of Theorem \ref{thKV} can be extended to base fields $k$ of characteristic $p\geq n-d$,  at least if $k$ is assumed to be infinite and perfect. 
In this argument, resolution of singularities is replaced with Gabber's improvement \mbox{\cite[Theorem 2.1]{Gabber}} of de Jong's alteration theorem.

Let us explain the significance and the importance of the restrictive hypothesis~$d<\frac{n}{2}$ in the statement of Theorem~\ref{thKV}.
It plays the  exact same role as in Whitney's weak embedding theorem \cite{Whitney} in differential topology (according to which any compact~$\ci$ manifold of dimension $d$ embeds in $\R^n$ if $d<\frac{n}{2}$).
The heuristic in our algebraic situation is that a morphism $f:Y\to X$ of smooth projective varieties over $k$ which is sufficiently generic is an embedding if $\dim(Y)<\frac{n}{2}$.  
Its image $f(Y)$ is then a smooth subvariety of~$X$.  This suggests that,  under the hypothesis $d<\frac{n}{2}$, the variety $X$ should contain many smooth subvarieties constructed by general projection arguments,  therefore increasing the likelihood that $\CH_d(X)$ is generated by classes of smooth subvarieties of $X$. Of course, the difficulty is to enforce this genericity condition on $f$ by algebraic means.

A Whitney-type hypothesis had already appeared in two earlier works on Question~\ref{qBH}. On the one hand, Hironaka had given a proof of Theorem \ref{thKV} under the additional assumption that $d\leq 3$ (see \cite[Theorem p.~50]{Hironaka}). On the other hand, I had constructed counterexamples to Question \ref{qBH} on the boundary $d=\frac{n}{2}$ of the Whitney range, for infinitely many values of $d$ (see \cite[Theorem 0.3]{Ben}), thereby showing that the hypothesis $d<\frac{n}{2}$ in Theorem \ref{thKV} is optimal (for these values of~$d$).  These works will be discussed in more details in \S\ref{lowd} and \S\ref{count} respectively.

\subsection{Flat pushforwards of complete intersections}
\label{flatci}

The approach of Koll\'ar and Voisin relies chiefly on the following definition. Recall that a morphism $f:Y\to X$ of connected smooth projective varieties over $k$ is \textit{flat} if and only if it is equidimensional, i.e.\ if all its fibers have dimension ${\dim(Y)-\dim(X)}$.

\begin{de}
\label{defKV}
Let $X$ be a smooth projective variety over $k$. Define $\CH_d(X)_{\KV}$ to be the subgroup of $\CH_d(X)$ generated by cycles of the form $f_*(\lambda_1\dots\lambda_c)$ for some flat morphism ${f:Y\to X}$ of smooth projective varieties over $k$ and some codimension~$1$ classes $\lambda_1,\dots,\lambda_c\in\CH^1(Y)$, where $c+d=\dim(Y)$.
\end{de}

In short, the subgroup $\CH_d(X)_{\KV}\subset\CH_d(X)$ is generated by those cycles that may be written as flat pushforwards of intersections of divisor classes. Theorem \ref{thKV} results from the combination of the following two theorems.

\begin{thm}[{\cite[Proposition 1.5]{KV}}]
\label{th1}
Let $X$ be a smooth projective variety of dimension $n$ over $k$. If~${d<\frac{n}{2}}$, then any element of $\CH_d(X)_{\KV}$ may be written as a linear combination with integral coefficients of classes of smooth subvarieties of $X$.
\end{thm}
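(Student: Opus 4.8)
The plan is to prove that each generator $f_*(\lambda_1\cdots\lambda_c)$ of $\CH_d(X)_{\KV}$ equals, up to sign and summation, the class of a smooth subvariety $f(W)\subset X$, where $W\subset Y$ is a smooth complete intersection on which $f$ restricts to a closed immersion. I would begin with two reductions. Since $Y$ is smooth, $\CH^1(Y)=\Pic(Y)$, so each $\lambda_i$ is a difference $[A_i]-[B_i]$ of two very ample divisor classes; expanding $\prod_{i=1}^c([A_i]-[B_i])$ by multilinearity reduces matters to a cycle $f_*(H_1\cdots H_c)$ in which every $H_i$ is very ample and $c+d=\dim(Y)$. Then, over the infinite field $k$, I would choose general members $D_i\in|H_i|$: by the theorems of Bertini and Kleiman, for a general choice the $D_i$ meet transversally, so that $W:=D_1\cap\dots\cap D_c$ is a smooth projective subvariety of $Y$ of dimension $d$ with $[W]=H_1\cdots H_c$ in $\CH_d(Y)$. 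Consequently $f_*(H_1\cdots H_c)=f_*[W]=(f|_W)_*[W]$, and it suffices to arrange that $f|_W\colon W\to X$ be a closed immersion.

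The heart of the argument is a general position count, carried out over $\bar k$, controlling the failure of $f|_W$ to be injective or unramified. For injectivity I would consider the off-diagonal part of the fiber product $Y\times_X Y$; here the flatness of $f$ is essential, as it forces every fiber of $f$ to have dimension $\dim(Y)-n$ and hence $\dim(Y\times_X Y)=2\dim(Y)-n=2c+2d-n$. Requiring a general complete intersection $W$ to contain two prescribed distinct points with the same image under $f$ costs codimension $2c$ in $\prod_{i=1}^c|H_i|$, since a very ample $H_i$ separates points and so imposes one independent linear condition per point. The associated incidence variety therefore has dimension at most $(2c+2d-n)+\dim\prod_{i=1}^c|H_i|-2c=(2d-n)+\dim\prod_{i=1}^c|H_i|$, which is strictly less than $\dim\prod_{i=1}^c|H_i|$ precisely because $d<\frac{n}{2}$; hence $f|_W$ is injective for general $W$. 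An analogous count on the projectivized kernel of $df$ inside the projectivized tangent bundle of $Y$ (again bounded by means of flatness), where very ampleness guarantees that passing through a point and being tangent to a fixed direction are two independent conditions, shows that $f|_W$ is unramified for general $W$; this condition is in fact the milder one, requiring only $d\le\frac{n}{2}$.

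For a general $W$ the morphism $f|_W$ is then injective on $\bar k$-points and unramified, hence an immersion of smooth varieties; being also proper, it is a closed immersion onto a smooth subvariety of $X_{\bar k}$. The locus of good parameters is a nonempty Zariski open subset of $\prod_{i=1}^c|H_i|$ defined over $k$, so it contains a $k$-point because $k$ is infinite; for such $W$ (defined over $k$, and not required to be connected) the image $f(W)$ is a smooth closed subvariety of $X$ with $(f|_W)_*[W]=[f(W)]$. Reassembling the multilinear expansion then writes the original generator as an integral linear combination of classes of smooth subvarieties, which is the assertion.

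The main obstacle is the genericity count of the second paragraph. The injectivity estimate is clean once flatness pins down $\dim(Y\times_X Y)$, but the unramifiedness estimate must be checked also on the loci where the rank of $df$ drops, i.e.\ where the fibers of $f$ are singular or the relative tangent spaces are larger than expected: one has to bound the dimensions of these strata—once more using flatness—so that the tangency incidence variety still has dimension smaller than $\dim\prod_{i=1}^c|H_i|$. Verifying that the very ampleness of the $H_i$ keeps the point-incidence and tangency conditions independent of codimension one throughout, including along these jump loci, is the delicate technical point.
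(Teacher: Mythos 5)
Your proposal follows essentially the same route as the paper's proof (which deduces the theorem from \cite[Proposition 2.1]{KV}): the same reduction writing each $\lambda_i$ as a difference of very ample classes and expanding multilinearly, then the same incidence-variety dimension counts over the parameter space $\prod_i|H_i|$, with flatness used to pin down $\dim(Y\times_XY)=2\dim(Y)-n$. Your injectivity count is complete and coincides with the paper's. Your observation that the tangency count, after projectivizing, only needs $d\le\frac{n}{2}$ is correct but immaterial, since injectivity already forces the strict inequality.

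However, the step you yourself flag as the ``delicate technical point'' is a genuine gap, and the tool you propose for it --- flatness --- cannot close it on its own. To bound the locus of pairs $(y,v)$ with $df_y(v)=0$, stratify $Y$ by $Y_r:=\{y\in Y\mid \rk(df_y)\le r\}$; over the stratum where the rank is exactly $r$ the kernel directions contribute $\dim(Y)-r$, so one needs $\dim(Y_r)\le r+\dim(Y)-n$ for every $r$. Flatness only gives $\dim(Y_r)\le \dim\big(\overline{f(Y_r)}\big)+(\dim(Y)-n)$, since the fibers of $f$ all have dimension $\dim(Y)-n$; the missing ingredient is the bound $\dim\big(\overline{f(Y_r)}\big)\le r$. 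This does not follow from flatness: it is a Sard-type statement, proved in the paper by applying generic smoothness (hence characteristic $0$) to $f|_{Y_r}:Y_r\to\overline{f(Y_r)}$, whose differential has rank $\le r$ at every smooth point of $Y_r$. (In characteristic $p$ this step genuinely fails --- think of Frobenius, where $df=0$ yet the image is everything --- which is why the positive-characteristic extension in \cite{Kollar} needs separate arguments.) Once this bound is inserted, the kernel incidence variety has dimension $\le 2\dim(Y)-n$ and the rest of your argument goes through exactly as written.
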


\begin{thm}[{\cite[Theorem 1.6]{KV}}]
\label{th2}
If $X$ is a smooth projective variety over $k$, then $${\CH_*(X)_{\KV}=\CH_*(X)}.$$
\end{thm}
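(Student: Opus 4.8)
The plan is to reduce the statement to a geometric construction producing, for each integral subvariety, a flat morphism and divisor classes realizing its class. First I would note that $\CH_d(X)$ is generated by the classes $[Z]$ of integral $d$-dimensional subvarieties $Z\subset X$, so it suffices to prove $[Z]\in\CH_d(X)_{\KV}$ for each such $Z$. The most naive idea---taking a resolution $\tZ\to Z\hookrightarrow X$ and pushing forward the fundamental class $[\tZ]$ (the case $c=0$ in Definition \ref{defKV})---is doomed, since a flat morphism to $X$ must be dominant, of relative dimension $\geq 0$; thus one is forced to produce higher-dimensional flat families $f\colon Y\to X$ (so $\dim Y\geq n$ and $c\geq n-d$) and to cut them down by $c$ divisor classes.

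Before attacking the general case, I would record two structural facts about the mechanism at play. The subgroup $\CH_*(X)_{\KV}$ is stable under flat pushforward: if $g\colon X'\to X$ is a flat morphism of smooth projective varieties and $\beta=f'_*(\lambda_1\cdots\lambda_c)$ with $f'\colon Y\to X'$ flat, then $g\circ f'$ is again flat and $g_*\beta=(g\circ f')_*(\lambda_1\cdots\lambda_c)$, so $g_*\CH_*(X')_{\KV}\subseteq\CH_*(X)_{\KV}$; this lets one work on convenient flat covers and push down. The simplest such covers are projective bundles $\pi\colon\bP(\sE)\to X$, which are flat. Writing $\xi=c_1(\sO(1))$ and $r=\rk(\sE)$, the projective bundle formula gives, for divisor classes $D_1,\dots,D_m$ on $X$,
\[
\pi_*\bigl(\xi^{\,i}\cdot\pi^*(D_1\cdots D_m)\bigr)=s_{i-r+1}(\sE)\cdot D_1\cdots D_m,
\]
and since $\xi$ and the $\pi^*D_j$ are divisor classes on $\bP(\sE)$, every class of the form $s_j(\sE)\cdot D_1\cdots D_m$ lies in $\CH_*(X)_{\KV}$. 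This shows how flat pushforwards of products of divisors generate Segre (equivalently Chern) classes of vector bundles against divisor classes; it also makes clear that projective bundles alone cannot suffice, since Chern classes of vector bundles do not generate $\CH_*(X)$ in general, and arbitrary flat families are genuinely needed.

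The real work is to capture an arbitrary integral $Z$. The natural attempt is to embed a resolution $\tZ$ into $X\times\bP^N$ (via $\iota_Z$ and a projective embedding), so that the flat projection $p$ satisfies $p_*[\tZ]=[Z]$, and then to realize $[\tZ]$ itself as a flat pushforward of a product of divisors---for instance via the blow-up $\sigma\colon\Bl_{\tZ}(X\times\bP^N)\to X\times\bP^N$, whose exceptional divisor $E$ satisfies $\sigma_*([E]^{\,r})=\pm[\tZ]$ with $r=\mathrm{codim}$. The obstacle, and the crux of the theorem, is flatness over $X$: the composite $\Bl_{\tZ}(X\times\bP^N)\to X$ is not equidimensional, because the exceptional fibre over a point of $\tZ$ is a $\bP^{\,r-1}$ of dimension $n+N-d-1$, which exceeds the expected fibre dimension $N$ as soon as $d<n-1$, the defect being even larger over the positive-dimensional fibres of $\tZ\to X$ lying above $\mathrm{Sing}(Z)$. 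I expect the heart of the proof---the strengthening Theorem \ref{th2'}, and the unique place where Hironaka's theorem on resolution of singularities intervenes---to consist precisely in repairing this: choosing the modification so that the resulting smooth total space maps equidimensionally (hence, both varieties being smooth, flatly) to $X$, while the lower-dimensional error cycles introduced by the correction are absorbed by a descending induction on $\dim Z$.
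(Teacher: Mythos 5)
Your preparatory analysis is accurate and matches the paper's own framing: the reduction to integral subvarieties, the stability of $\CH_*(X)_{\KV}$ under flat pushforward (this is exactly how the paper concludes, pushing forward along the flat projection $X\times\bP^N_k\to X$), the remark that Segre classes of vector bundles against divisor classes lie in $\CH_*(X)_{\KV}$ (this is \cite[Lemma 2.5]{KV}, quoted in \S\ref{lowc}, and it already gives the theorem with rational coefficients), and the correct fiber-dimension count showing that $\Bl_{\tZ}(X\times\bP^N_k)\to X$ fails to be equidimensional. The problem is that the proposal stops exactly where the theorem begins. Your final sentence --- that you ``expect the heart of the proof \dots to consist precisely in repairing this'' --- is a restatement of the difficulty, not an argument: nothing in the proposal produces, for even one subvariety $Z$ not already covered by the Segre-class remark, a flat morphism $f:Y\to X$ and divisor classes $\lambda_1,\dots,\lambda_c$ with $f_*(\lambda_1\cdots\lambda_c)=[Z]$.

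Moreover, the repair you gesture at does not match the actual mechanism, and there is no evidence it could be made to work. The paper never renders $\Bl_{\tZ}(X\times\bP^N_k)\to X$ flat by a further modification, and no statement of the form ``flat up to lower-dimensional error cycles absorbed by induction on $\dim Z$'' occurs anywhere; on the contrary, the key result (Theorem \ref{th2'}) is exact at the level of subvarieties: every integral $Z\subset X$ is, birationally, the image under a flat morphism of a smooth complete intersection, with no error terms at all. This is obtained from two ingredients that are entirely absent from your proposal: (a) Proposition \ref{pro2}, the technical heart, asserting that a smooth subvariety of dimension $<\frac{n}{4}$ can be turned into a smooth complete intersection by a chain of blow-ups of smooth complete bundle sections (proved by a delicate induction involving hyperplane sections acquiring ordinary quadratic singularities and ``full intersection'' control of the blow-up centers); and (b) Proposition \ref{pro1}, a descent statement allowing one to push sci-images back down through these blow-ups, whose proof must construct new flat families at each step --- e.g.\ in Proposition \ref{hyp} one blows up the graph $\Gamma\subset H\times X$ to obtain $B$ with $B\to H$ smooth and $B\to X$ flat but not smooth, the one place where non-smooth flat morphisms are indispensable. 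Hironaka's theorem intervenes only at the very start, to replace $Z$ by a smooth $\tZ\subset X\times\bP^N_k$ with $N$ so large that $\dim\tZ<\frac{1}{4}\dim(X\times\bP^N_k)$; it plays no role in repairing flatness. Without (a) and (b) --- that is, without the notion of sci-image and its calculus --- your proposal is a correct identification of the obstruction together with a conjecture that it can be overcome.
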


Theorem \ref{th1} is the part of the proof of Theorem \ref{thKV} where the smoothing actually takes place.  It is also the part in which the Whitney-type hypothesis $d<\frac{n}{2}$ is used.  It is directly inspired by Hironaka's work  \cite{Hironaka} on the topic. We discuss it more in~\S\ref{lowd}, and present its proof in Section \ref{s1}.

In contrast, Theorem \ref{th2} holds with no restriction on the dimension of the cycles. It constitutes a fundamentally new structural result on the Chow groups of arbitrary smooth projective varieties.   Its formulation and its proof are the main achievements of the article \cite{KV}.
It is not known whether Theorem \ref{th2} would still hold if one required the morphisms~$f$ in Definition \ref{defKV} to be smooth instead of only flat (see \cite[Question~1.11]{KV}).
The flexibility gained by allowing flat morphisms that are possibly not smooth is used exactly once in the proof of Theorem \ref{th2} (in the proof of Proposition \ref{hyp}).

As noted in \cite{Kollar}, the proof of Theorem \ref{th2} given in \cite{KV} yields a slightly stronger result, valid at the level of subvarieties (as opposed to Chow groups).  
To state it, we rely on the next geometric definition. Recall that a subvariety of codimension $c$ in a smooth projective variety $X$ over $k$ is said to be a \textit{complete intersection} if it can be written as the intersection of $c$ hypersurfaces in $X$.

\begin{de}
\label{defKV2}
Let $X$ be a smooth projective variety over $k$.  An integral subvariety~$Z\subset X$ is said to be a smooth complete intersection image (or \textit{sci-image} for short) if there exist a flat morphism $f:Y\to X$ of smooth projective varieties over $k$ and a smooth complete intersection $V\subset Y$ such that $f(V)=Z$ and $f|_V:V\to Z$ is birational.
\end{de}

\begin{thm}[{\cite[Theorem 2]{Kollar}}]
\label{th2'}
Let $X$ be a smooth projective variety over $k$. Any integral subvariety $Z\subset X$ is an sci-image.
\end{thm}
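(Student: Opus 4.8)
The plan is to produce, for a fixed integral subvariety $Z\subset X$ of dimension $d$ and codimension $c=n-d$, a flat morphism $f\colon Y\to X$ of smooth projective varieties together with a smooth complete intersection $V\subset Y$ for which $f|_V\colon V\to Z$ is birational; this is exactly what Definition \ref{defKV2} demands. Observe first that such a statement refines Theorem \ref{th2}: if $V=D_1\cap\dots\cap D_c$ with the $D_i$ divisors on $Y$ and $\lambda_i=[D_i]\in\CH^1(Y)$, then $f_*(\lambda_1\cdots\lambda_c)=f_*[V]=[Z]$ because $f|_V$ is birational, so realizing every $Z$ as an sci-image immediately yields $\CH_*(X)_{\KV}=\CH_*(X)$. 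I would therefore run the construction underlying Theorem \ref{th2}, keeping track of the \emph{geometry} of the complete intersection rather than only of its class.

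The first step is to invoke Hironaka's theorem \cite{Hironaka} to choose a resolution $g\colon\tZ\to Z$ with $\tZ$ smooth projective of dimension $d$ and $g$ birational onto $Z$; this is the single appeal to resolution of singularities in the whole proof of Theorem \ref{thKV}. Composing with the inclusion gives a morphism $\tZ\to X$ that is generically injective, and it suffices to realize some smooth projective variety birational to $\tZ$ as a smooth complete intersection inside a smooth projective $Y$ carrying a flat morphism to $X$ that restricts to a model of $g$. To obtain a first ambient space I would combine $g$ with an embedding $\tZ\hookrightarrow\bP^N$ furnished by a very ample linear system, producing a closed embedding $\tZ\hookrightarrow X\times\bP^N=:Y_0$ whose composition with the smooth, hence flat, projection $Y_0\to X$ equals $g$. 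At this stage $\tZ$ is smooth in $Y_0$ but of large codimension $N+c$ and is in no way a complete intersection.

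The heart of the argument, and the step I expect to be the main obstacle, is to modify $Y_0$ into a smooth projective $Y$ with a \emph{flat} morphism $f\colon Y\to X$ in which $\tZ$ is recovered as a genuine complete intersection. The natural device is to blow up $Y_0$ along $\tZ$, and if necessary along further smooth centres lying over $Z$: since $g$ has generically finite fibres, blowing up $\tZ$ alters the fibres of $Y_0\to X$ only along the fibres of $g$, leaving every fibre of dimension $N$, so the resulting morphism $f\colon Y\to X$ stays equidimensional and hence flat. Crucially, the fibres of $f$ over points of $Z$ acquire singularities, so $f$ is flat without being smooth; this is precisely the flexibility isolated in Proposition \ref{hyp}, and it is what makes the construction possible while remaining inside Definition \ref{defKV}. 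In the blow-up the exceptional divisor is a projective bundle $\bP(N_{\tZ/Y_0})\to\tZ$, and one arranges, by choosing the twisting line bundles (the classes $\lambda_i$ of Definition \ref{defKV}) appropriately, that a suitable section of this bundle, a variety birational to $\tZ$, is cut out transversally by $c$ divisor classes on $Y$.

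Finally I would pass to general members $D_1,\dots,D_c$ of the chosen linear systems and set $V=D_1\cap\dots\cap D_c$. The delicate points, which constitute the real difficulty, are threefold: ensuring by a Bertini argument relative to $X$ that $V$ is \emph{smooth}; guaranteeing that $V$ is a single irreducible component, with no residual piece of the complete intersection surviving to spoil the equality $f(V)=Z$; and checking that $f|_V\colon V\to Z$ is birational rather than multiple-to-one. Controlling the residual intersection, that is, separating the proper transform of $\tZ$ from the linkage partner produced by any complete intersection containing it, is the crux, and is exactly where the freedom in the choice of $Y$ and of the divisor classes must be spent. Once $V$ is produced with these properties, $Z$ is an sci-image by Definition \ref{defKV2}, completing the proof; as noted, the same $V$ witnesses $[Z]\in\CH_d(X)_{\KV}$, in accordance with Theorem \ref{th2}.
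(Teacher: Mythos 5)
Your first two steps (resolving $Z$ by $\tZ\to Z$, embedding $\tZ$ in $\bP^N_k$ and viewing it inside $Y_0:=X\times\bP^N_k$) coincide with the paper's proof. But the heart of your argument rests on a flatness claim that is false. If you blow up $Y_0$ along $\tZ$, the exceptional divisor is the projectivization $\bP(N_{\tZ/Y_0})$, a $\bP^{\,n+N-d-1}$-bundle over $\tZ$. Hence over a point $x\in Z$ with $\tZ_x\neq\varnothing$, the fiber of $Y=\Bl_{\tZ}Y_0\to X$ contains a component of dimension at least $n+N-d-1$, which strictly exceeds the general fiber dimension $N$ as soon as $\mathrm{codim}_X Z=n-d\geq 2$ (and things are worse still over singular points of $Z$, where $\tZ\to Z$ may have positive-dimensional fibers). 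So $Y\to X$ is not equidimensional, hence not flat, and your construction leaves the scope of Definition \ref{defKV2}. This is precisely the trap the paper is built to avoid: blow-downs composed with projections are essentially never flat, which is why the paper never uses them as the flat morphism $f$. Instead it proves the functoriality result Proposition \ref{pro1} (via Propositions \ref{hyp} and \ref{cbs}), whose proofs manufacture a \emph{new} flat morphism by embedding the blow-up of a complete bundle section $C=\{s=0\}\subset X$ as a cbs inside the projective bundle $\bP(\sE)\to X$, which is flat. This mechanism is also the reason the admissible blow-up centers must be smooth cbs, not arbitrary smooth subvarieties such as $\tZ$ itself.

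There is a second, independent gap: your assertion that one can "arrange, by choosing the twisting line bundles appropriately," that a copy of $\tZ$ is cut out transversally by divisors in the blow-up is exactly the hard content of the theorem, not a step one can wave through. Even the weaker statement that an integral subvariety becomes a smooth complete intersection after repeatedly blowing up smooth cbs is explicitly not known (the paper cites \cite[Question 4.1]{KV} for this). What the paper proves instead is Proposition \ref{pro2}: for a \emph{smooth} subvariety of dimension $<\frac{\dim Y}{4}$ (achievable for $\tZ\subset Y_0$ after increasing $N$), one can produce, after blow-ups of smooth cbs, a smooth complete intersection $V$ mapping birationally onto it; this occupies all of Section \ref{s3} and involves the delicate control of ordinary quadratic singularities and full-intersection conditions. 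Your proposal, by omitting both Proposition \ref{pro1} and Proposition \ref{pro2}, collapses the two pillars of the proof into unsupported claims. (A minor further slip: a $d$-dimensional complete intersection in $Y$ requires $n+N-d$ divisors, not $c=n-d$ of them.)
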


Theorem \ref{th2} is an immediate consequence of Theorem \ref{th2'}. In turn, the proof of Theorem \ref{th2'} has two steps. In the first step, one studies functorial properties of sci-images. The culmination of this analysis is the next proposition.   In its statement, one makes use of the following terminology: a subvariety of codimension $c$ in a smooth projective variety $X$ over $k$ is said to be a \textit{complete bundle section} (or \textit{cbs} for short) if it can be written as the zero locus of a section of a vector bundle of rank $c$ on $X$.

\begin{pro}[\mbox{\cite[Proposition \hspace{-.1em}3.11]{KV}, \cite[Lemma \hspace{-.1em}16]{Kollar}}]
\label{pro1}
Let $\pi:X'\to X$ be the blow-up of a smooth cbs in a smooth projective variety over $k$.  Let $Z'\subset X'$ be an sci-image such that $\pi|_{Z'}:Z'\to Z:=\pi(Z')$ is birational. Then $Z\subset X$ is an sci-image.
\end{pro}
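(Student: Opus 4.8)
The plan is to combine the two birational data we are given and then correct the single defect in the resulting witness, which is flatness. Unwinding Definition \ref{defKV2}, the hypothesis that $Z'$ is an sci-image provides a flat morphism $g\colon W\to X'$ of smooth projective varieties and a smooth complete intersection $V\subset W$ with $g(V)=Z'$ and $g|_V\colon V\to Z'$ birational. Composing with the blow-down, $\pi\circ g|_V\colon V\to Z$ is a composite of two birational morphisms, hence birational, and its image is $\pi(Z')=Z$. Thus the triple $(W,V,\pi\circ g)$ would already witness $Z$ as an sci-image, were it not that $\pi\circ g$ need not be flat: the blow-up $\pi$ contracts its exceptional divisor, a $\bP^{c-1}$-bundle over the center $C$, so the fibers of $\pi\circ g$ jump in dimension by $c-1$ over $C$. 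The entire content of the proposition is therefore to repair this failure of equidimensionality over $C$, while keeping the source smooth and projective and $V$ a smooth complete intersection mapping birationally to $Z$.

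A first key observation is that one cannot hope to fix flatness by a proper modification of the source $W$: any such modification $b\colon \widetilde W\to W$ satisfies $(\pi gb)^{-1}(x)=b^{-1}((\pi g)^{-1}(x))$, whose dimension is at least that of $(\pi g)^{-1}(x)$, so blow-ups of $W$ never shrink the oversized fibers over $C$. One must instead enlarge $W$. The tool for this is the description of the blow-up of a cbs inside a projective bundle: writing $C=Z(s)$ for a regular section $s$ of a rank-$c$ vector bundle $E$ on $X$, the blow-up $\Bl_C X$ is canonically the zero locus $Z(\bar\sigma)\subset\bP(E)$ of the section $\bar\sigma$ of the rank-$(c-1)$ tautological quotient bundle induced by $s$, where $p\colon\bP(E)\to X$ is the associated $\bP^{c-1}$-bundle and $\pi=p|_{X'}$. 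In particular $X'\hookrightarrow\bP(E)=:P$ is a regular embedding into a variety that is smooth, hence flat, over $X$; this exhibits the non-flatness of $\pi$ as arising solely from the closed immersion $X'\subset P$ and provides an ambient space $P$ that is genuinely flat over $X$ in which to work.

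Accordingly, the construction I would carry out is to extend the flat family $g\colon W\to X'$ to a flat family $\widetilde g\colon\widetilde W\to P$ of smooth projective varieties with $\widetilde g^{-1}(X')=W$; locally along $X'$ this is the trivial extension $W\times\mathbb{A}^{c-1}\to X'\times\mathbb{A}^{c-1}$, and the cbs structure of the center is what organizes the normal directions into the honest bundle over which one globalizes. Once such a $\widetilde W$ is in hand, $p\circ\widetilde g\colon\widetilde W\to X$ is flat as a composite of flat morphisms, and $V\subset W=Z(\widetilde g^*\bar\sigma)\subset\widetilde W$ is cut out in $\widetilde W$ by the complete intersection defining $V$ in $W$ together with the section $\widetilde g^*\bar\sigma$; since $p\circ\widetilde g|_V=\pi\circ g|_V$ is birational onto $Z$, this exhibits $Z$ as an sci-image. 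The main obstacle is precisely the existence of this global flat extension with $\widetilde W$ smooth and projective: extending a flat family across the regular embedding $X'\subset P$ and compactifying it over all of $P$ without introducing singularities is the technical heart of the argument, and it is here that one uses the freedom afforded by the cbs structure. Two further points must be addressed: reconciling the resulting complete bundle section $V\subset\widetilde W$ with the complete-intersection requirement of Definition \ref{defKV2} (choosing the extension so that $W\subset\widetilde W$ is itself cut out by line bundles, or invoking an equivalence between cbs- and sci-images), and treating separately the degenerate case $Z\subset C$, in which $Z'$ lies in the exceptional $\bP^{c-1}$-bundle and the statement reduces to the behavior of sci-images under the projection of that bundle onto $C$.
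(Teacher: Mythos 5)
Your geometric setup coincides with the paper's: diagram (\ref{diag}) realizes the blow-up $X'=\Bl_C X$ as a smooth cbs $b:X'\hookrightarrow\bP(\sE)$, cut out by the section of the rank-$(c-1)$ tautological quotient bundle induced by $s$, and your endgame (compose a witness over $\bP(\sE)$ with the flat bundle projection $\pi:\bP(\sE)\to X$ and invoke birationality of $\pi|_{b(Z')}$, which follows from $\nu=\pi\circ b$ and the hypothesis on $\nu|_{Z'}$) is also the paper's. The gap is the transfer step in the middle, which you yourself label ``the technical heart'': you must turn the witness $(g:W\to X',\,V\subset W)$ into a witness over $\bP(\sE)$, and you propose to do so by extending $g$ to a flat morphism $\widetilde g:\widetilde W\to\bP(\sE)$ of smooth projective varieties with $\widetilde g^{-1}(X')=W$. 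No justification is given, and none can be expected: \'etale-locally or formally near $X'$ one can extend by pulling back along a local retraction of $\bP(\sE)$ onto $X'$, but there is no global retraction and the local extensions need not glue; globally, such an extension is an obstructed problem (it amounts to extending the classifying map of the family $W/X'$ across the embedding $X'\subset\bP(\sE)$), and the cbs structure of $C$ supplies no mechanism for it. Since the whole difficulty of the proposition is concentrated in this step, the proposal does not constitute a proof.

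The paper's mechanism is different and sidesteps the extension problem entirely. It first proves that sci-images push forward along inclusions of smooth hypersurfaces (Proposition \ref{hyp}) and then, by induction on the codimension, along inclusions of smooth cbs (Proposition \ref{cbs}); applying Proposition \ref{cbs} to $b:X'\hookrightarrow\bP(\sE)$ is what transfers the witness, after which one composes with $\pi$ as you do. Crucially, the proof of Proposition \ref{hyp} never extends the given family $f:Y\to H$ over the ambient variety: it blows up the graph $\Gamma\subset H\times X$ of the inclusion, producing a correspondence $B$ that is smooth over $H$ and flat (though not smooth) over $X$, base changes to $Y_B:=Y\times_H B$ --- so the source is enlarged by a fiber product over a correspondence dominating both $H$ and $X$, which is exactly the move your ``first key observation'' rules out for modifications of the source alone --- and then intersects $V_B$ with the divisor $f_B^{-1}(E)$ ($E$ the exceptional divisor) and with pullbacks of general very ample divisors, to obtain a smooth complete intersection in $Y_B$ mapping birationally onto $i(Z)$. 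This also disposes of your two residual worries for free: all cuts are divisorial, so the witness is a genuine smooth complete intersection as required by Definition \ref{defKV2} rather than a cbs, and no separate treatment of the case $Z\subset C$ is needed.
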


To conclude the proof of Theorem \ref{th2'}, one would like to show that any integral subvariety of $X$ can be turned into a smooth complete intersection after repeatedly blowing up smooth cbs.  This is not known to hold (see e.g.\ \cite[Question~4.1]{KV}) and the second step is a weaker statement, which is sufficient for our purposes.

\begin{pro}
[\mbox{\cite[Theorem 1.9]{KV}}]
\label{pro2}
Let $X$ be a smooth projective variety of dimension $n$ over $k$. Let $Z\subset X$ be a smooth subvariety of dimension $<\frac{n}{4}$. Then there exist a composition $\pi^+:X^+\to X$ of blow-ups of smooth cbs and a smooth complete intersection $V\subset X^+$ such that $\pi^+(V)=Z$ and $\pi^+|_{V}:V\to Z$ is birational.
\end{pro}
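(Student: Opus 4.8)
The plan is to measure the failure of $Z$ to be a complete intersection through its normal bundle, and to repair it by blow-ups. If $Z=Z(s)$ is cut out by a section $s$ of a direct sum $\bigoplus_{i=1}^{c}\sL_i$ of line bundles on $X$ (with $c=n-d$), then its normal bundle is forced to split: $N_{Z/X}\cong\bigoplus_{i=1}^{c}\sL_i|_Z$. Conversely, the obstruction to realizing $Z$ as a complete intersection lies almost entirely in the fact that $N_{Z/X}$ is, in general, neither a direct sum of line bundles nor a restriction of bundles from $X$. I would therefore organize the proof in two phases: first, arrange (after a controlled sequence of blow-ups of smooth cbs) that the strict transform of $Z$ becomes a \emph{smooth cbs}, say $Z=Z(s)$ for a section $s$ of a rank-$c$ vector bundle $\sE$; and second, split $\sE$ into line bundles by a further tower of blow-ups, thereby upgrading the cbs into a genuine complete intersection.

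For the first phase, the point is that $\dim(Z)=d$ is very small compared with $n$. The defining data of a cbs amounts to a vector bundle $\sE$ on $X$ of rank $c$ with $\sE|_Z\cong N_{Z/X}$ together with a section vanishing transversally exactly along $Z$; the obstructions to producing such data (extending the normal bundle off $Z$, lifting the tautological section, and achieving transversality) are governed by cohomology groups and by general-position conditions that can be killed once $d$ is small and the relevant line bundles are taken sufficiently ample. When these obstructions do not vanish on the nose, I would first perform an auxiliary blow-up of a smooth cbs to gain the missing positivity or to simplify the class of $N_{Z/X}$, keeping the restriction of the blow-down to the strict transform birational onto $Z$.

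The second phase is a geometric version of the splitting principle, realized by blow-ups rather than by passing to a flag bundle (which would change the birational type of $X$ and is therefore not allowed). Blowing up the smooth cbs $Z$ produces an exceptional divisor $\bP(N_{Z/X})$ carrying a tautological line bundle; a line sub-bundle $\ell\subset N_{Z/X}$ determines a section of this projective bundle, i.e.\ an isomorphic copy $Z_1$ of $Z$ inside the blow-up whose normal bundle is an extension of the line bundle $\sO(E)|_{Z_1}$ cut out by the exceptional divisor by (a twist of) $N_{Z/X}/\ell$, of rank $c-1$. Thus one line-bundle direction, now a restriction of a line bundle defined on the blown-up variety, has been peeled off. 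Iterating this $c$ times — at each stage blowing up the current copy of $Z$ and choosing a line sub-bundle of the remaining normal bundle — produces a variety $X^+$ in which the final copy $V$ of $Z$ has completely split normal bundle, with graded pieces that are restrictions of line bundles on $X^+$; extending the corresponding sections (again possible because $\dim(Z)$ is small) exhibits $V$ as a smooth complete intersection with $\pi^+|_V\colon V\to Z$ birational.

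The main obstacle is ensuring that \emph{every} center blown up along the way is a smooth cbs, and that all the strict transforms of $Z$ remain smooth and meet the exceptional divisors transversally. This is precisely where the hypothesis $d<\frac{n}{4}$ is consumed: the auxiliary centers produced by the construction (copies of $Z$ sitting in exceptional projective bundles, and the cbs used to create positivity) have dimension bounded by a small multiple of $d$, and the Bertini-type transversality statements, the cohomological vanishings needed to extend sections, and the realization of each center as a cbs all require these dimensions to stay below roughly $\frac{n}{2}$ — which, after the doubling inherent in passing to incidence and exceptional loci, is exactly the condition $d<\frac{n}{4}$. Verifying that the cbs condition propagates through the tower, and bookkeeping the line bundles so that the final defining section is genuinely defined on $X^+$, is the technical heart of the argument.
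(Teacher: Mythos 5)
Your proposal has genuine gaps in both phases. The first phase --- ``arrange, after blow-ups of smooth cbs, that the strict transform of $Z$ becomes a smooth cbs'' --- is not an intermediate step you can wave at: it \emph{is} the technical heart of the whole theorem (Proposition \ref{pro3}, i.e.\ Properties 4.6 and 4.8 of Koll\'ar--Voisin), and your proposed mechanism for it does not work. You suggest extending $N_{Z/X}$ to a bundle $\sE$ on $X$, lifting a tautological section, and achieving transversality, with obstructions ``killed'' by ampleness and small dimension; but a vector bundle on $Z$ does not in general extend to $X$ (or to any blow-up of $X$), and even when $\sE$ exists there is no reason a section of it should vanish \emph{exactly} along $Z$ --- and you cannot create positivity by twisting, since $\sE|_Z$ is pinned to be $N_{Z/X}$. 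The paper builds the cbs structure in the opposite direction, ``from above'': a general complete intersection $Y\supset Z$ acquires ordinary quadratic singularities along a smooth $\Sigma\subset Z$ of dimension $<d$ (Lemma \ref{quadsing}); by induction on $d$ one makes $\Sigma$ a union of connected components of a smooth cbs and blows it up, and Lemma \ref{encorecbs} (ii) shows the strict transform of $Y$ then becomes a smooth cbs. The hypothesis $d<\frac{n}{4}$ is consumed not by cohomological vanishing but by the ``full intersection'' bookkeeping (auxiliary complete intersections of dimension $<\frac{n}{2}$ containing $\Sigma$ must be smooth). Note also that even this machinery only makes $Z'$ a \emph{union of connected components} of a smooth cbs $C$, not a cbs itself; your phase 1 claims more than the actual proof achieves.

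Even granting phase 1, your endgame fails. Producing a copy $V\cong Z$ in $X^+$ whose normal bundle is $\bigoplus_i\sL_i|_V$ with $\sL_i\in\Pic(X^+)$ does not make $V$ a complete intersection: you still need sections $s_i\in H^0(X^+,\sL_i)$ whose common zero locus is exactly $V$, and here again the $\sL_i$ cannot be twisted to gain positivity (their restrictions to $V$ are prescribed), so general sections vanishing on $V$ will have excess components --- the same linkage phenomenon that blocks Hironaka's method. The implication you invoke is false as a general principle: a line $L$ in a smooth quadric threefold $Q$ has $N_{L/Q}\cong\sO_L\oplus\sO_L(1)$, both summands restrictions of line bundles on $Q$, yet $L$ is not a complete intersection in $Q$ for degree reasons. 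Your tower also requires each intermediate copy $Z_i$ (a section of an exceptional projective bundle) to be a smooth cbs before it may be blown up, which you flag as ``the main obstacle'' but never establish. The paper's endgame sidesteps all of this by exploiting that only birationality of $V\to Z$ is required, not an isomorphism: blow up the cbs $C$, let $E$ be the exceptional divisor over $Z'$, and set $V:=E\cap D_1\cap\dots\cap D_{c-1}$ with $D_i$ general in $|\nu^*\sL^{\otimes l}(-E)|$. This is manifestly a smooth complete intersection ($E$ itself is one of the defining hypersurfaces), and it meets the general fiber $\bP^{c-1}$ of $E\to Z'$ in a single point, hence is birational to $Z$ --- no section extension, no splitting principle, no isomorphic copy of $Z$ needed.
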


Proposition \ref{pro2} is the technical heart of \cite{KV}.
Propositions \ref{pro1} and \ref{pro2} will be proved in Sections \ref{s2} and \ref{s3} respectively. 
Together, they imply Theorem \ref{th2'}, as we now explain.

\vspace{.5em}

\begin{proof}[Proof of Theorem \ref{th2'}]
Let $\tZ\to Z$ be a resolution of singularities (see \cite{Hironaka}).  Embed~$\tZ$ in $\bP^N_k$ for some $N\geq 0$. 
View $\tZ$ as a subvariety of $Y:=X\times\bP^N_k$ using the natural diagonal embedding. After possibly increasing $N$, one can apply Proposition \ref{pro2} to the subvariety~$\widetilde{Z}$ of $Y$.  In this way, we obtain a composition $\pi^+: Y^+\to Y$ of blow-ups of smooth cbs and a smooth complete intersection $V\subset Y^+$ such that $\pi^+(V)=\widetilde{Z}$ and~$\pi^+|_{V}:V\to \widetilde{Z}$ is birational.  Using Proposition \ref{pro1}, we deduce that the subvariety~$\tZ\subset Y$ is an sci-image.  It follows that $Z\subset X$ is also an sci-image, by flatness of the first projection morphism $Y=X\times\bP^N_k\to X$.
\end{proof}

\subsection{Smoothing techniques}
\label{parpos}

We now focus on the various techniques that have been used to smooth algebraic cycles up to rational equivalence,  both in the early positive results about Question~\ref{qBH} and in the recent work of Koll\'ar and Voisin. These techniques fall into two categories,  depending on whether the algebraic cycles under consideration have small dimension, or small codimension. In the first case, they are best thought of homologically and presented as pushforwards (see~\S\ref{lowd}).  In the second case they are best thought of cohomologically and presented as pullbacks (see~\S\ref{lowc}).
Combining the results presented in~\S\ref{lowd} and \S\ref{lowc} shows that Question \ref{qBH} has a positive answer when $n\leq 5$.

\subsubsection{Cycles of small dimension}
\label{lowd}

The first progress on Question \ref{qBH} was due to Hironaka \cite[Theorem p.~50]{Hironaka} who answered it positively when $d<\frac{n}{2}$ (the Whitney-type condition discussed in \S\ref{parWhitney}) and $d\leq 3$. Let us sketch his proof.

Let $Z\subset X$ be an integral subvariety of dimension $d$, and let $\tZ\to Z$ be a resolution of singularities.  
Choosing an embedding of $\tZ$ in $\bP^N_k$ for some $N\geq 0$ allows one to view~$\tZ$ as a subvariety of $Y:=X\times\bP^N_k$. (That Hironaka's argument inspired the proof of Theorem~\ref{th2'} given at the end of \S\ref{flatci} should be obvious.) At that point,  the idea is to find a cycle~$\tZ'$ in $Y$, which is rationally equivalent to $\tZ$, and whose components are smooth and in general position.  The image $Z'$ of $\tZ'$ in $X$ is then rationally equivalent to $Z$, and has smooth components precisely because of the Whitney-type hypothesis.

To construct $\tZ'$ from $\tZ$, one needs some kind of moving lemma. To this effect, Hironaka devises a \textit{moving by linkage} technique.  Let $\sL$ be a sufficiently ample line bundle on $Y$, and let $c$ be the codimension of $\tZ$ in~$Y$.  Let $D_1,\dots,D_c$ be general elements of the  linear system $|\sL|$, and let $E_1,\dots,E_c$ be general elements of $|\sL|$ containing $\tZ$. One can write $E_1\cap\dots\cap E_c=\tZ\cup W$ for some subvariety $W\subset Y$ of codimension $c$.  One says that the subvarieties $\tZ$ and $W$ of $Y$ are \textit{linked}.  Our choices ensure that the cycle~$\tZ':=(D_1\cap\dots \cap D_c)-W$ is rationally equivalent to $\tZ$.  The subvariety $W$ is in general singular in codimension~$4$, so it is smooth when $d\leq 3$.  To complete the proof, repeat the linkage procedure a few times to enforce the general position hypothesis.

For cycles of dimension $\geq 4$, Hironaka's method fails because of the singularities that linkage inevitably creates.  However, combined with additional arguments to control these singularities, these ideas may still be useful  (see \cite[Theorems 0.4 and 0.6]{Ben} for applications to real algebraic cycles).

The perspective of Koll\'ar and Voisin is very different.  They do not attempt to control the singularities that appear. Neither do they try to develop another moving technique applicable to general cycles. Instead,  they prove the structural result that all cycles come by flat pushforward from complete intersections (Theorem \ref{th2}) and they use that complete intersections can be moved around very easily.  A small price to pay is that they need to apply the Whitney-type general projection argument to a morphism~$f:Y\to X$ that is only assumed to be flat, and not smooth as is the first projection~$X\times\bP^N_k\to X$ in Hironaka's proof.
This is the content of Theorem \ref{th1}.

\subsubsection{Cycles of small codimension}
\label{lowc}

The case $d=n$ of Question \ref{qBH} is trivial (as the $n$-dimensional subvarieties of $X$ are its connected components, which are smooth).

It is also true that Question \ref{qBH} has a positive answer for $d=n-1$.  Indeed, if $D\subset X$ is an irreducible divisor, one can write~$\sO_X(D)=\sL_1\otimes\sL_2^{-1}$, where the $\sL_i$ are very ample line bundles on $X$.  General divisors~$D_i$ in the linear systems $|\sL_i|$ are then smooth by the Bertini theorem.  To conclude, write~$[D]=[D_1]-[D_2]$ in $\CH_{n-1}(X)=\Pic(X)$.

The article \cite{Kleiman} of Kleiman can be thought of as extending this argument to higher-codimensional cycles. This led him, in particular, to give positive answers to Question~\ref{qBH} when $n=4$ and $d=2$, and when $n=5$ and $d=3$.

For the sake of completeness,  let us briefly explain Kleiman's argument. The Grothendieck--Riemann--Roch theorem without denominators
shows that 
\begin{equation}
\label{GRR}
[Z]=(-1)^{c-1}(c-1)!\,c_c(\sO_Z)
\end{equation}
in $\CH^c(X)$, for any subvariety $Z\subset X$ of codimension $c$ (see e.g.\ \cite{Jouanolou}). 
Applying~(\ref{GRR}) with~${c=2}$, and using a resolution of the coherent sheaf $\sO_Z$ by locally free sheaves,  one can see that $\CH^2(X)$ is generated (integrally) by second Chern classes of vector bundles.  
Tensoring these vector bundles with high enough powers of a fixed ample line bundle, one may even assume that these vector bundles are globally generated.

In other words, the Chow group $\CH^2(X)$ is generated by classes of the form $f^*c_2(\sE)$, where $f:X\to G$ is a morphism to a Grassmannian, and $\sE$ is the tautological vector bundle on $G$.  The class $c_2(\sE)\in\CH^2(G)$ is represented by a Schubert cell $S\subset G$, whose singularities are in codimension $4$.  Assuming that $f$ is in general position with respect to $S$, which can be ensured by postcomposing $f$ with a generic automorphism of~$G$, the class $f^*c_2(\sE)$ is represented by the subvariety $f^{-1}(S)$ of $X$, which is smooth if $d\leq 3$.

We also note, following \cite[Lemma 2.5]{KV}, that this circle of ideas yields an easy proof of Theorem~\ref{th2} with rational coefficients. Indeed, if $\alpha\in\CH^c(X)$, then (\ref{GRR}) shows that~$(c-1)!\,\alpha$ belongs to the subring of $\CH^*(X)$ generated by Chern classes of vector bundles, and hence to the subring of $\CH^*(X)$ generated by Segre classes of vector bundles (since the total Chern class $c(\sE)$ and the total Segre class $s(\sE)$ of a vector bundle $\sE$ on $X$ are related by the identity $c(\sE)\cdot s(\sE)=1$). It is however obvious, from their definition given in \cite[\S 3.1]{Fulton}, that (products of) Segre classes belong to~$\CH^*(X)_{\KV}$.

\subsection{Nonsmoothable algebraic cycles}
\label{parnon}

At last, we discuss the existing counterexamples to Question \ref{qBH}.

\subsubsection{A topological variant}

 Even before the work of Borel and Haefliger,  an analogue of Question \ref{qBH} in differential topology was considered by Thom in his influential article~\cite{Thom}.

\begin{que}
\label{qThom}
Let $M$ be an oriented compact $\ci$ manifold.  Are the homology groups 
$H_*(M,\Z)$ generated by fundamental classes of oriented $\ci$ submanifolds of $M$?
\end{que}

One of Thom's results is that a class in $H_d(M,\Z)$ can be realized as the fundamental class of an oriented $\ci$ submanifold of dimension $d$ of $M$ if and only if its Poincar\'e dual cohomology class is induced by pullback from some universal cohomology class: the Thom class of the universal oriented real vector bundle of rank~${\dim(M)-d}$ (see~\mbox{\cite[Th\'eor\`eme II.5]{Thom}}). 
The vanishing of certain cohomological operations applied to this Thom class then give rise to restrictions on the fundamental classes of $d$\nobreakdash-dimensional oriented compact~$\ci$ submanifolds of~$M$. Thom used these restrictions in \cite[Th\'eor\`eme~III.9]{Thom} to settle Question \ref{qThom} in the negative in general.

Let us point out that one can use Thom's criterion to verify that Question~\ref{qThom} has a positive answer in the Whitney range (that is, for classes in $H_d(M,\Z)$ with $d<\frac{\dim(M)}{2})$.
The counterpart of Theorem \ref{thKV} in differential topology is therefore true.

\subsubsection{Counterexamples to Question \ref{qBH}}
\label{count}

  By applying Thom's ideas to complex algebraic varieties, Hartshorne, Rees and Thomas \cite[Theorem 1]{HRT} found the first counterexample to Question \ref{qBH}. Their precise result is as follows. Let $X:=G(3,6)$ be the Grassmannian parametrizing the $3$-dimensional subspaces of $\C^6$, and let $\mathcal{E}$ be the tautological rank $3$ vector bundle on~$X$. Then $c_2(\mathcal{E})\in \CH^2(X)$ is not a linear combination with integral coefficients of classes of smooth subvarieties of $X$.

  Since then, many other counterexamples to Question \ref{qBH} have been discovered, by combining topological obstructions as above with Hodge-theoretic arguments  (exploiting that the image of the cycle class map $\CH_d(X)\to H_{2d}(X,\Z)$ consists of Hodge classes).  
Debarre \cite{Debarre} constructed counterexamples on abelian varieties, and these examples were expanded by him and myself in \cite{BD}. In particular, we show in \cite[Corollary~1.3]{BD} that Question~\ref{qBH} may have a negative answer for codimension $2$ cycles on abelian varieties of dimension $6$ (as we explained in \S\ref{parpos}, this is the lowest possible dimension for the ambient variety~$X$). 

In addition, I have found counterexamples to Question \ref{qBH} with~$d=\frac{n}{2}$, for infinitely many values of $d$ (the smallest one being $d=6$).  For all these values of $d$, Theorem \ref{thKV} is therefore optimal. 
In the statement of this result, we let $\alpha(m)$ denote the number of ones in the binary expansion of the integer $m$.

\begin{thm}[{\cite[Theorem 0.3]{Ben}}]
\label{thsmooth}
Let $d\geq 0$ be such that $\alpha(d+1)\geq 3$.  Then there exists a smooth projective variety $X$ of dimension $2d$ over $\C$ such that $\CH_d(X)$ is not generated by classes of smooth subvarieties of $X$.
\end{thm}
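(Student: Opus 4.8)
The plan is to isolate a cohomology operation that annihilates the mod $2$ reduction of the class of any smooth subvariety, but not of the algebraic class we shall write down; additivity of the operation then takes care of arbitrary $\Z$-linear combinations. First I would record the constraint satisfied by a single smooth subvariety. If $W\subset X$ is a smooth subvariety of a smooth projective variety of complex dimension $2d$, then $W$ has complex codimension $d$ and its normal bundle $N_W$ is a complex vector bundle of rank $d$; in particular all its odd Stiefel--Whitney classes vanish, and $w_i(N_W)=0$ for $i>2d$. Writing $u_W\in H^{2d}(X,\Z/2)$ for the reduction of the class of $W$ and $\iota\colon W\hookrightarrow X$ for the inclusion, Thom's computation of the Steenrod squares of a Thom class (equivalently, the Wu formula for $\iota$) gives $\mathrm{Sq}(u_W)=\iota_*(w(N_W))$. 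Combined with the relation $w(N_W)=\iota^*w(T_X)\cdot w(T_W)^{-1}$, the vanishing of the odd Stiefel--Whitney classes of $N_W$ forces $u_W$ to lie in the kernel of a cohomology operation $\theta$ assembled from the $\mathrm{Sq}^i$ and the Wu classes of $X$.

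Since each $\mathrm{Sq}^i$ is additive and $\theta$ is built from them by composition, sum and cup product with fixed classes, the operation $\theta$ is itself additive; hence it annihilates the reduction of the class of any $\Z$-linear combination of smooth subvarieties of $X$. To exhibit a nonsmoothable algebraic cycle it therefore suffices to find $z\in\CH_d(X)$ with $\theta(\overline{[z]})\neq 0$. The delicate point is that $\theta$ must separate smooth subvarieties from arbitrary (possibly singular) algebraic cycles. The most naive candidates are ruled out: the odd Steenrod square $\mathrm{Sq}^3=\mathrm{Sq}^1\mathrm{Sq}^2$ already vanishes on every algebraic class, because $\mathrm{Sq}^3(\overline{[z]})$ is the reduction of the integral operation $\beta\,\mathrm{Sq}^2$ applied to $[z]$, which is the first Atiyah--Hirzebruch obstruction to lying in the image of the cycle class map. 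One must therefore work higher in the Steenrod algebra, seeking an operation that vanishes on classes of smooth complex subvarieties without lying in the (smaller) collection of operations forced to vanish on all algebraic classes.

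Next I would construct $X$ and $z$ explicitly. A convenient choice is a smooth projective variety with cellular, entirely algebraic cohomology that approximates a classifying space---for instance a product of projective spaces, or a suitable linear variety---so that the cycle class map is surjective and any class we write down is automatically algebraic; Hodge theory intervenes here only to guarantee that the relevant Hodge class is represented by a genuine algebraic cycle. On such a variety the total Steenrod square acts on a monomial in the hyperplane classes through binomial coefficients modulo $2$, so that $\theta(\overline{[z]})$ is computed by Lucas' theorem. The hypothesis $\alpha(d+1)\geq 3$ is exactly the arithmetic condition on the binary expansion of $d+1$ that makes this computation nonzero in the middle degree $2d$: it is the threshold below which every operation vanishing on smooth-subvariety classes also vanishes on all algebraic classes, and at or above which an operation $\theta$ separating the two exists and takes a nonzero value on the explicit $z$. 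The smallest case $d=6$, where $d+1=7=(111)_2$, is consistent with this.

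The main obstacle is the second step: pinning down the operation $\theta$ and proving that $\alpha(d+1)\geq 3$ is precisely the threshold for its existence and nontriviality. This is a self-contained problem about the action of the Steenrod algebra together with the Wu formula, and it reduces to a careful analysis of binary expansions (through Lucas' theorem) that distinguishes the constraints imposed by a complex normal bundle of rank $d$ from those automatically satisfied by every integral---hence every algebraic---class. Once $\theta$ has been identified, checking that $z$ is algebraic and of middle dimension, and that $\theta(\overline{[z]})\neq 0$, should be comparatively routine.
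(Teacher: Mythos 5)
Your plan hinges on the existence of an \emph{additive} cohomology operation $\theta$ that annihilates the mod $2$ class of every smooth $d$-dimensional subvariety of $X^{2d}$ but not of some algebraic cycle class. No such operation can exist in the middle dimension, and this is precisely the difficulty of the problem. Indeed, let $Z\subset X$ be any integral $d$-dimensional subvariety and let $f:\tZ\to X$ be a resolution of singularities followed by the inclusion, so that the mod $2$ class of $Z$ equals $f_*(1)$. The Wu--Thom formula holds for arbitrary maps, not just embeddings: $\mathrm{Sq}(f_*(1))=f_*\bigl(w(\nu_f)\bigr)$, where $\nu_f=f^*T_X-T_{\tZ}$ is the \emph{virtual} normal bundle. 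This virtual bundle is a virtual complex bundle of rank $d$, so its odd Stiefel--Whitney classes vanish, its even ones are reductions of Chern classes related to $w(T_X)$ and $w(T_{\tZ})$ exactly as in the embedded case, and its classes in degree $>2d$ vanish automatically because $\dim_{\R}\tZ=2d$. Thus every constraint you can extract from ``complex normal bundle of rank $d$ plus the Wu formula'' for smooth subvarieties holds verbatim for \emph{all} algebraic classes; the distinction between an actual and a virtual rank-$d$ bundle, which does power the Hartshorne--Rees--Thomas counterexample in codimension $2$ (there a genuine rank-$2$ normal bundle has $w_i=0$ for $i>4$, while a virtual one on a $7$-fold need not), is invisible here because the rank equals the dimension of the base. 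The one relation special to embeddings is the self-intersection formula $u_W^2=\iota_*(e(N_W))$, but that relation is \emph{quadratic} in $u_W$, hence useless for your additivity argument. Separately, your proposed ambient varieties cannot work: on a product of projective spaces the Chow groups are generated by products of linear subspaces, which are smooth.

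The paper's proof exploits exactly the quadratic relation above (the double point formula), together with Hodge theory, which is not a topological ingredient. One takes $X=(A\times A)/(\Z/4)$, a free quotient with $A$ a very general principally polarized abelian $d$-fold, and $\beta$ the class of the image of $A\times\{0\}$, arranged so that (i) $\degr(\gamma\cdot\gamma')$ is even for any two Hodge classes $\gamma,\gamma'$ (a Mumford--Tate argument; this is where very-generality enters), (ii) $\degr(\beta^2)\equiv 2\pmod 4$, and (iii) $c(T_X)=1$ rationally. If $\beta=\sum_i n_i[Z_i]$ with $Z_i$ smooth, one expands $\degr(\beta^2)$: the cross terms $2\sum_{i<j}n_in_j\degr([Z_i]\cdot[Z_j])$ are divisible by $4$ by (i) — this is how the paper handles the problem that your additivity was meant to solve — while the diagonal terms equal $\sum_i n_i^2\degr(c_d(N_{Z_i/X}))=\sum_i n_i^2\degr(s_d(T_{Z_i}))$ by the double point formula and (iii), and each $\degr(s_d(T_{Z_i}))$ is divisible by $4$ by the Rees--Thomas divisibility theorem for Chern numbers. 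That theorem, not a threshold for Steenrod operations, is where the hypothesis $\alpha(d+1)\geq 3$ enters; so your instinct that binary expansions and Chern-number divisibility are central is correct, but they apply to the Segre numbers of the subvarieties inside a self-intersection computation, yielding $\degr(\beta^2)\equiv 0 \pmod 4$ and contradicting (ii).
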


The proof of Theorem \ref{thsmooth} is discussed in Section \ref{s4}.  For now, let us only explain in what way its principle relies on the Whitney heuristic discussed in~\S\ref{parWhitney}. 
Let~$X$ be a smooth projective variety of dimension $2d$ over $\C$. Fix $\beta\in\CH_d(X)$.  
As a first step towards proving Theorem \ref{thsmooth}, one might want to check that, if $\beta$ is the class of an integral subvariety $Z\subset X$, then $Z$ cannot be smooth.  To do so, imagine first that~$Z$ has been constructed by some kind of generic projection argument. In the range of dimensions we consider, the subvariety $Z$ would then have finitely many singular points at which it self\nobreakdash-intersects in~$X$.  To show that $Z$ cannot be smooth, compute this number of singularities using a double point formula,  and prove 
(only relying on the knowledge of~$\beta$ and not of~$Z$ itself) that this number must be nonzero. 
If $Z$ has more complicated singularities,  the double point formula still computes a virtual number of double points, allowing one to proceed in a similar~way.

For what values of $d$ (besides those found in Theorem \ref{thsmooth}) is Theorem \ref{thKV} optimal?  A full answer to this question is not known.
The first case that is open is $d=3$.

\begin{que}
Let $X$ be a smooth projective variety of dimension $6$ over $k$. Is the Chow group~$\CH_3(X)$ generated by classes of smooth subvarieties of $X$?
\end{que}

\subsubsection{Mildly singular cycles}
\label{mild}

It is also interesting to consider weakenings of Question \ref{qBH}, in which one allows subvarieties with controlled singularities.

Maggesi and Vezzosi put forward in \cite{MV} the case of subvarieties with locally complete intersection (lci) singularities.  (The article \cite{MV} works with Chow groups with rational coefficients,  but we defer until \S\ref{parrational} the consideration of rational Chow groups). It turns out that this variant of Question \ref{qBH} still has a negative answer.  In fact, in the counterexamples given by Theorem \ref{thsmooth}, the group $\CH_d(X)$ is not even generated by classes of lci subvarieties of $X$ (as we will prove in Section \ref{s4}, see Theorem \ref{thlci}).

In contrast, we do not know the answer to the next question.

\begin{que}
\label{qnormal}
Let $X$ be a smooth projective variety over $k$.  Are the Chow groups~$\CH_*(X)$ of $X$ generated by classes of normal subvarieties of $X$?
\end{que}

 In this direction, the arguments of Kleiman described in \S\ref{parrational} imply that $\CH_*(X)_{\Q}$ is generated by classes of normal subvarieties,  whereas the Koll\'ar--Voisin method shows that $\CH_*(X)$ is generated by classes of subvarieties whose normalization is smooth.

\subsection{Algebraic cycles with rational coefficients}
\label{parrational}

We conclude this introduction by discussing a fascinating
smoothing problem for algebraic cycles: the case of Chow groups with rational coefficients.

\begin{que}
\label{qQ}
Let $X$ be a smooth projective variety over $k$.  Are the rational Chow groups $\CH_*(X)_{\Q}$ of $X$ generated by classes of smooth subvarieties of $X$?
\end{que}

Let us first indicate that the analogue of Question \ref{qQ} in differential topology does have a positive answer. Namely, Thom has proved that, for any oriented compact~$\ci$ manifold $M$, the $\Q$-vector space $H_*(M,\Q)$ is generated by fundamental classes of oriented $\ci$ submanifolds of~$M$ (see \cite[Th\'eor\`eme II.29]{Thom}). This contrasts sharply with Thom's negative answer to Question~\ref{qThom}.  
In \cite[Theorem 0.4]{BV}, Voisin and I obtained a symplectic extension of Thom's result. More precisely,  we showed that if $M$ is a symplectic compact $\ci$ manifold, then the $\Q$-vector space $H_{2d}(M,\Q)$ is generated by fundamental classes of $d$-dimensional symplectic $\ci$ submanifolds of $M$. 
We believe that this result dashes any hope of finding topological obstructions to Question~\ref{qQ}.

Question \ref{qQ} was first investigated by Kleiman \cite{Kleiman},  using the techniques presented in \S\ref{lowc}.  Thanks to the Grothendieck--Riemann--Roch theorem without denominators (see  (\ref{GRR})), he reduces first to the case of Chern classes, and then to the case of cycles that are in the image of $f^*$ for some algebraic morphism $f:X\to G$ to a Grassmannian.  He further notes that $f$ may be chosen to be in generic position (with respect to any subvariety of $G$)  by composing it with a generic automorphism of~$G$. 

 On the one hand, this line of reasoning shows that it suffices to solve Question \ref{qQ} when $X$ is a Grassmannian.  This case is by no means easy: it is not even known if the second Chern class of the tautological bundle on $G(3,6)$ (the Hartshorne--Rees--Thomas counterexample to the original question of Borel and Haefliger, see \S\ref{count}) is a linear combination with rational coefficients of classes of smooth subvarieties of $G(3,6)$. 

On the other hand,  combining the above arguments with a study of the singularities of Schubert cells,  Kleiman answered positively Question~\ref{qQ} for cycles of dimension ${d<\frac{n}{2}+1}$ (see \cite[Theorem 5.8]{Kleiman}). In particular, the rationalized version of Theorem~\ref{thKV} has been known for a long time, with slightly better bounds on the dimension of the cycles.  It is striking that Kleiman's result has not been improved in the past fifty years.

In \cite[Theorem 0.3]{BV}, Voisin and I have put forward a curious connection between Question~\ref{qQ} and Hartshorne's famous conjecture on complete intersections in projective space (according to which any smooth subvariety of codimension $2$ in $\mathbb{P}^N_{\C}$ with~$N\gg0$  is a complete intersection; Hartshorne suggests the bound $N\geq 7$ but the assertion might hold for $N\geq 5$). Namely,  we prove that if Hartshorne's conjecture holds for some $N\geq 5$, then Question \ref{qQ} has a negative answer for codimension~$2$ cycles on the Grassmannian~$G(N,2N)$. We leave it to the readers to decide for themselves if they believe more in Hartshorne's conjecture or in a positive answer to Question \ref{qQ}.

\subsection{Notation and conventions}

 We work over a a field $k$ of characteristic $0$.  A variety over $k$ is a separated scheme of finite type over $k$. All the varieties that we consider are implicitly assumed to be equidimensional. By subvariety, we always mean closed subvariety.

\subsection{Acknowledgements}

  I thank J\'anos Koll\'ar and Claire Voisin for useful suggestions.

\section{Smoothing elements of $\CH^*(X)_{\KV}$}
\label{s1}

The aim of this section is to prove Theorem \ref{th1} by establishing the next proposition (see \cite[Proposition 2.1]{KV}).

\begin{pro}
\label{prosmoothing}
Let $f:Y\to X$ be a flat morphism of smooth projective varieties over $k$.  Let $\sL_1,\dots,\sL_c$ be very ample line bundles on $Y$.  For $1\leq i\leq c$, let $D_i\subset Y$ be a general member of the linear system $|\sL_i|$. Set $Z:=D_1\cap\dots\cap D_c$.
If $\dim(Y)-c<\frac{\dim(X)}{2}$, 
then $f|_Z:Z\to X$ is an embedding of smooth projective varieties over $k$.
\end{pro}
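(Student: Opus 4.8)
The plan is to show that, for a general choice of the $D_i$, the morphism $f|_Z$ is a closed immersion, which (as $Z$ is projective) amounts to checking that $f|_Z$ is injective on points and that its differential is injective at every point of $Z$. Set $m:=\dim(Y)$, $n:=\dim(X)$ and $r:=m-n\ge 0$ (being flat and proper, $f$ is surjective with equidimensional fibres of dimension $r$). Note that $\dim(Z)=m-c=d$ and that the hypothesis reads $2d<n$. The smoothness of $Z$, and the fact that $Z$ is of pure dimension $d$ with $T_zZ=\bigcap_i T_zD_i$ at every $z\in Z$, follows from iterated applications of the Bertini theorem in characteristic $0$ to the very ample linear systems $|\sL_i|$; I would dispose of it first. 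It then remains to treat injectivity and the injectivity of the differential, each by a dimension count on an incidence variety over $\prod_i|\sL_i|$.

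For injectivity, I would introduce $\Sigma:=(Y\times_X Y)\setminus\Delta_Y$, the locus of pairs of distinct points lying in a common fibre of $f$. Since $f$ is flat of relative dimension $r$, the first projection $Y\times_X Y\to Y$ is flat of relative dimension $r$ as well, so $\dim(Y\times_X Y)=2m-n$ and $\dim(\Sigma)\le 2m-n$. Consider the incidence variety $I\subset\Sigma\times\prod_i|\sL_i|$ of tuples $(y_1,y_2,(D_i))$ with $y_1,y_2\in D_i$ for all $i$. As the $\sL_i$ are very ample, they separate points, so over a fixed $(y_1,y_2)\in\Sigma$ the condition $y_1,y_2\in D_i$ is of codimension $2$ in $|\sL_i|$; hence the fibre of $I\to\Sigma$ has codimension $2c$ in $\prod_i|\sL_i|$ and $\dim(I)\le(2m-n)+\dim\big(\prod_i|\sL_i|\big)-2c$. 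The image of the projection $I\to\prod_i|\sL_i|$ therefore has dimension at most $\dim\big(\prod_i|\sL_i|\big)+(2m-2c-n)=\dim\big(\prod_i|\sL_i|\big)+(2d-n)$, which is $<\dim\big(\prod_i|\sL_i|\big)$ precisely because $2d<n$. A general $(D_i)$ thus avoids this image, which says exactly that $f|_Z$ is injective. This is the one place where the \emph{strict} Whitney inequality is used.

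For the injectivity of the differential, I would argue similarly with tangent directions. At $z\in Z$ the map $d(f|_Z)_z$ fails to be injective exactly when $T_zZ$ meets $\ker(df_z)=T_z\big(Y_{f(z)}\big)$ nontrivially. Let $P:=\{(z,[v])\in\bP(T_Y):df_z(v)=0\}$ be the projectivisation of the vertical tangent directions, and let $J\subset P\times\prod_i|\sL_i|$ parametrise $(z,[v],(D_i))$ with $z\in D_i$ and $v\in T_zD_i$ for all $i$. Very ampleness of $\sL_i$ (which now guarantees that $|\sL_i|$ separates tangent vectors, i.e.\ surjects onto $1$-jets) makes the condition ``$z\in D_i$ and $v\in T_zD_i$'' of codimension $2$ in $|\sL_i|$, so the fibre of $J\to P$ has codimension $2c$ and $\dim(J)\le\dim(P)+\dim\big(\prod_i|\sL_i|\big)-2c$. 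Arguing as above, a general $(D_i)$ will make $d(f|_Z)$ injective everywhere provided $\dim(P)<2c=2m-2d$; since $2d\le n$, it suffices to prove the bound $$\dim(P)\le 2m-n-1.$$

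This last bound is the crux, and the step I expect to be the main obstacle — precisely because $f$ is flat but \emph{not} smooth. Over the open locus where $f$ is smooth, $\ker(df_z)$ has constant dimension $r$, so $P$ is there a $\bP^{r-1}$-bundle over an $m$-dimensional base and contributes dimension $2m-n-1$; the difficulty is that $\ker(df_z)=T_z(Y_{f(z)})$ jumps in dimension over the singular loci of the fibres of $f$. I would control this through the degeneracy loci $B_j:=\{z\in Y:\rk(df_z)\le n-j\}$, on which $P$ has fibre dimension $r+j-1$, and reduce the bound to the claim that $\operatorname{codim}_Y(B_j)\ge j$ for all $j\ge 1$. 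Here flatness is essential: as $Y$ and $X$ are smooth and $f$ is flat, the fibres of $f$ are local complete intersections of pure dimension $r$. I would prove the claim by induction on $n=\dim(X)$, cutting $X$ by a general very ample hypersurface $H$: the fibres of $f^{-1}(H)\to H$ coincide with those of $f$, so $B_j(f)\cap f^{-1}(H)=B_j(f|_{f^{-1}(H)})$, and Bertini (smoothness of $H$ and of $f^{-1}(H)$) together with flat base change let the inductive hypothesis apply, while a general-position argument relates $\dim\big(B_j(f)\cap f^{-1}(H)\big)$ to $\dim(B_j(f))$. Summing the contributions $\dim(B_j)+(r+j-1)\le(m-j)+(r+j-1)=2m-n-1$ over all strata then yields $\dim(P)\le 2m-n-1$, and we conclude: a general $(D_i)$ makes $f|_Z$ an injective immersion of the smooth projective variety $Z$, hence an embedding onto a smooth subvariety of $X$.
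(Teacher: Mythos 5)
Your proof is correct, and for most of its length it coincides with the paper's: the Bertini step, the injectivity count (the locus of pairs of distinct points in a common fibre has dimension $\leq 2m-n$ by flatness, and containing a fixed pair in every $D_i$ is a codimension-$2c$ condition in $\prod_i|\sL_i|$), and the reduction of immersivity to a dimension bound on the vertical tangent directions, stratified by the rank of $df$, are all exactly the paper's argument. The genuine difference is at the step you single out as the crux, the bound $\mathrm{codim}_Y(B_j)\geq j$. In the paper's notation $Y_r:=\{y\in Y\mid\rk(df_y)\leq r\}$ one has $B_j=Y_{n-j}$, so your claim reads $\dim(Y_r)\leq r+(m-n)$, and the paper proves it in two lines rather than by induction: since the differential of $f|_{Y_r}:Y_r\to\overline{f(Y_r)}$ has rank $\leq r$ at every smooth point of $Y_r$, generic smoothness (characteristic $0$) forces $\dim\big(\overline{f(Y_r)}\big)\leq r$, and flatness (equidimensionality of the fibres of $f$) then gives $\dim(Y_r)\leq\dim\big(\overline{f(Y_r)}\big)+(m-n)$. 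Your alternative route --- induction on $\dim(X)$ by cutting with a general very ample $H\subset X$ --- does work: the scheme-theoretic fibres of $f_H:=f|_{f^{-1}(H)}$ agree with those of $f$, so $\ker d(f_H)_z=\ker df_z$, and smoothness of $f^{-1}(H)$ (Bertini in characteristic $0$, needed because $f^*|\sL|$ is only base-point free) gives $\rk\, d(f_H)_z=\rk\, df_z-1$; this is precisely why your corank-indexed $B_j$ is the invariant stable under the cut. One case your sketch leaves implicit: a component $W$ of $B_j$ with $f(W)$ a point satisfies $W\cap f^{-1}(H)=\varnothing$ for general $H$, so the induction sees nothing of it; it is rescued directly by flatness, since then $\dim(W)\leq m-n\leq m-j$ (note $B_j=\varnothing$ for $j>n$). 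In the end both arguments rest on characteristic $0$ and on equidimensionality of fibres; what the paper's generic-smoothness trick buys is brevity (no induction, no auxiliary Bertini theorems), while your version makes explicit the geometric mechanism --- stability of fibres under base change --- by which flatness substitutes for smoothness of $f$.
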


To deduce Theorem \ref{th1} from Proposition \ref{prosmoothing},  note that any class in~$\CH^1(Y)=\Pic(Y)$ is a difference of very ample line bundles. As a consequence,  under the hypotheses of Theorem \ref{th1}, any element of $\CH_d(X)_{\KV}$ can be represented by a linear combination with integral coefficients of smooth subvarieties of~$X$ constructed as in Proposition~\ref{prosmoothing}.

\vspace{.7em}

\begin{proof}
Let us first prove that $f|_Z$ is injective.  The set of pairs $(y_1,y_2)\in Y^2$ such that~$y_1\neq y_2$ and $f(y_1)=f(y_2)$ is an algebraic variety of dimension ${2\dim(Y)-\dim(X)}$. Once such a pair $(y_1,y_2)$ is fixed, the codimension in $|\sL_1|\times\dots\times|\sL_c|$ of the set of~$(D_1,\dots, D_c)$ such that $y_1,y_2\in Z$ is equal to $2c$. The codimension,  in this parameter space, of the locus where $f|_Z$ is not injective is therefore
$\geq 2c-(2\dim(Y)-\dim(X))>0$.

The proof that $f|_Z$ is immersive is similar, and based on the stratification
$$Y_r:=\{y\in Y\mid \rk(df_y)\leq r\}$$
of $Y$ by the rank of the differential of $f$.  Fix $r\geq 0$. The set of pairs $(y,v)$ with $y\in Y_r$ and~$v\in T_yY$ such that $df_y(v)=0$ is an algebraic variety of dimension 
$$\leq \dim(Y_r)+(\dim(Y)-r)\leq \dim\big(\overline{f(Y_r)}\big)+(\dim(Y)-\dim(X))+(\dim(Y)-r).$$

The rank at any smooth point of $Y_r$ of the differential of $f|_{Y_r}:Y_r\to \overline{f(Y_r)}$ is $\leq r$. 
By generic smoothness of $f|_{Y_r}$, we deduce that $\dim\big(\overline{f(Y_r)}\big)\leq r$. It follows that the set of pairs $(y,v)$ we were considering has dimension $\leq 2\dim(Y)-\dim(X)$. 

Once such a pair $(y,v)$ is fixed, the codimension in $|\sL_1|\times\dots\times|\sL_c|$ of the set of~$(D_1,\dots, D_c)$ such that $v\in T_yZ$ is equal to $2c$. 
We deduce that the codimension,  in this parameter space, of the locus where $f|_Z$ is not immersive at some point of $Y_r$ is~${\geq 2c-(2\dim(Y)-\dim(X))>0}$.
\end{proof}

\section{Functorial properties of sci-images}
\label{s2}

Sci-images were introduced in Definition \ref{defKV2}.  In this section, we show that this class of subvarieties is stable under taking images by various classes of morphisms. 
We follow \cite[Propositions 3.7, 3.9 and 3.11]{KV} and \cite[Lemmas 13, 15 and~16]{Kollar}.

We let $X$ denote a smooth projective variety of dimension $n$ over~$k$.  

\begin{pro}
\label{hyp}
Let $i:H\hookrightarrow X$ be the inclusion of a smooth hypersurface.  Let~$Z\subset H$ be an sci-image. Then $i(Z)\subset X$ is an sci-image.
\end{pro}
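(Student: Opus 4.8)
The plan is to start from the naive construction and repair its only defect. Since $Z\subset H$ is an sci-image, there exist a flat morphism $g:W\to H$ of smooth projective varieties and a smooth complete intersection $V=D_1\cap\dots\cap D_c\subset W$, such that $g(V)=Z$ and $g|_V:V\to Z$ is birational. The obvious candidate for a presentation of $i(Z)$ is the composition $i\circ g:W\to X$, with the same $V$. This fails for exactly one reason: $i\circ g$ is not flat. Indeed its image is the divisor $H$, so its fibres over $X\setminus H$ are empty, while its fibres over $H$ have dimension $\dim(W)-\dim(H)=\dim(W)-\dim(X)+1$ instead of $\dim(W)-\dim(X)$. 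All the work therefore goes into spreading $W$ out over the whole of $X$ so as to restore equidimensionality, without destroying the complete intersection $V$.

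First I would fix the flatness. After choosing a suitable closed embedding $\iota:W\hookrightarrow\bP^r_k$, view $W$ as a smooth subvariety of $P:=X\times\bP^r_k$ via $(i\circ g,\iota)$; it lies over $H$. Let $p:P\to X$ be the (smooth, flat) projection and set $Y:=\Bl_W P$, with exceptional divisor $E$ and blow-down $\beta:Y\to P$. I claim that $f:=p\circ\beta:Y\to X$ is flat. The variety $Y$ is smooth, and a fibre computation shows that $f$ is equidimensional: over $x\notin H$ the fibre is $\bP^r_k$, while over $x\in H$ it is the blow-up of $\bP^r_k$ along $g^{-1}(x)$, and both have dimension $r$. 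Miracle flatness then gives flatness of $f$. Crucially, $f$ is not smooth: its fibres over $H$ are blow-ups of $\bP^r_k$ along the (possibly singular) fibres of $g$, and they degenerate away from the generic fibre. This is precisely the point where one exploits that $g$ — coming from an sci-datum on $H$ — is only flat and not smooth, and it should be the unique place in the whole argument where a flat non-smooth morphism is needed.

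Next I would produce the complete intersection. The normal direction to $H$ in $X$ singles out, along $W$, a line-bundle quotient $N_{W/P}\twoheadrightarrow\sL$ with $\sL:=g^*N_{H/X}$; this quotient defines a section $\delta:W\to E$ of the projective bundle $\rho:E=\bP(N_{W/P})\to W$. I would set $V':=\delta(V)\subset E\subset Y$, a copy of $V$. By construction $f(V')=g(V)=i(Z)$, and $f|_{V'}$ is identified with $g|_V$, hence is birational onto $i(Z)$. The intuition behind the codimension bookkeeping is clean: the codimension-one inclusion $H\subset X$ should contribute exactly one extra equation (morally the pullback of the section cutting out $H$), so that $V'$ is cut out by the $c$ equations defining $V\subset W$ together with the data describing the section $\delta$ and the exceptional divisor.

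The main obstacle is this last step: exhibiting $V'$ as a genuine smooth complete intersection in $Y$, rather than merely as a smooth subvariety cut out with the help of the exceptional projective bundle. Transporting the $c$ hypersurfaces $D_i$ from $W$ to $Y$, describing the section $\delta$ by actual hypersurfaces, and incorporating the exceptional divisor must all be arranged so that the total defining system consists of honest hypersurfaces meeting in the expected codimension, with smooth intersection equal to $V'$. This is where the real technical work lies, and it must be carried out compatibly with the flatness of $f$ established above. It is exactly the tension between making $f$ equidimensional over all of $X$ — which forces the non-smooth degeneration over $H$ — and keeping the distinguished subvariety a complete intersection that constitutes the heart of the proof.
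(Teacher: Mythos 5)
Your flatness step survives scrutiny, although your description of the fibres does not: for $x\in H$ the fibre of $f$ is not the blow-up $\Bl_{g^{-1}(x)}\bP^r_k$ alone, but the union of that blow-up with the full exceptional locus over $g^{-1}(x)$, which is a $\bP^{\,n+r-\dim W-1}$-bundle over $g^{-1}(x)$; both components have dimension $r$, so equidimensionality, and hence flatness of $f$ by miracle flatness, still hold. The first genuine gap is the section $\delta$. The exceptional divisor $E=\bP(N_{W/P})$ parametrizes normal directions of $W$ in $P$, so a section of $\rho:E\to W$ is the same thing as a line \emph{subbundle} of $N_{W/P}$. What you actually have is a line-bundle \emph{quotient} $N_{W/P}\twoheadrightarrow g^*N_{H/X}$, and this yields only the corank-one subbundle $N_{W/(H\times\bP^r_k)}$, i.e.\ the divisor $\bP\bigl(N_{W/(H\times\bP^r_k)}\bigr)\subset E$, not a section. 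A section realizing ``the normal direction to $H$'' would require a splitting of
$$0\to N_{W/(H\times\bP^r_k)}\to N_{W/P}\to g^*N_{H/X}\to 0,$$
which does not exist in general, so $V'=\delta(V)$ is undefined. (This gap is repairable: one does not need an actual section, only a subvariety of $E|_V$ mapping birationally to $V$, which general ``relative hyperplane'' divisors would provide.)

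The second gap is the one you defer, and in your architecture it is not deferrable technical work but an obstruction. The hypersurfaces $D_1,\dots,D_c$ cutting out $V$ are divisors on $W$, and $W$ is exactly what you blew down: since $\Pic(Y)=\beta^*\Pic(P)\oplus\Z[E]$, the only divisor classes on $E$ extending to $Y$ are of the form $\rho^*(\sM|_W)+m\,\xi$ with $\sM\in\Pic(P)$ and $\xi=c_1\bigl(\sO_Y(-E)|_E\bigr)$, whereas the classes $\sO_W(D_i)$ have no reason to be restrictions from $P=X\times\bP^r_k$; moreover a section of a projective bundle is essentially never a complete intersection in the total space. So there is no way to finish along these lines without new input. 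The paper's proof avoids both gaps by reversing the roles of the two ingredients: it blows up the graph $\Gamma\subset H\times X$ of $i$ --- a centre independent of the sci-datum --- obtaining $\nu:B\to H\times X$ with $q_H:B\to H$ smooth and $q_X:B\to X$ flat but not smooth (the feature you correctly identified as essential); it then base-changes the \emph{entire} sci-datum along the smooth morphism $q_H$, so that $V_B\subset W_B:=W\times_H B$ remains a smooth complete intersection simply because the hypersurfaces defining $V$ pull back; finally, writing $E_B\subset B$ for the exceptional divisor and $\mathcal{A}$ for an ample line bundle on $H\times X$, it intersects $V_B$ with $g_B^{-1}(E_B)$ and with the $g_B$-preimages of $n-1$ general members of $|\nu^*\mathcal{A}^{\otimes l}(-E_B)|$, which restrict to relative hyperplanes on the $\bP^{n-1}$-bundle $E_B\to\Gamma$ and therefore cut out a subvariety mapping birationally onto $i(Z)$. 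Keeping the sci total space as a fibre-product factor, rather than as a blow-up centre, is what preserves the complete-intersection structure; replacing the nonexistent section by a general linear section is what makes birationality attainable.
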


\begin{proof}
Let $\Gamma\subset H\times X$ be the graph of $i$.  Let $\nu: B\to H\times X$ be the blow-up of $\Gamma$.
The proof of the proposition relies on the commutative diagram
\begin{equation*}
\begin{aligned}
\xymatrix@C=1.5em@R=3ex{
&B\ar^(.56){\nu}[d]\ar_{q_H}[ldd]\ar^{q_X}[rdd]& \\
&H\times X\ar^{p_H}[ld]\ar_{p_X}[rd]& \\
H&&X\rlap{,}
}
\end{aligned}
\end{equation*}
in which $p_H$ and $p_X$ are the two projections. 
We claim that $q_H$ is smooth and that~$q_X$ is flat. To prove it, note that all the fibers of $q_H$ are smooth of pure dimension~$n$,  and that all the fibers of $q_X$ are of pure dimension $n-1$. 
More precisely, the fiber of~$q_H$ over~$x\in H$ is the blow-up of $X$ at $i(x)$.    
If $x\in H$, the fiber of~$q_X$ over~$i(x)$ has two irreducible components: one is the blow-up of~$H$ at $x$, and the other is the $(n-1)$-dimensional projective space $\nu^{-1}(x,i(x))$. The other fibers of~$q_X$ are isomorphic~to~$H$.  

Since $Z\subset H$ is an sci-image, there exist a flat morphism $f:Y\to H$ of smooth projective varieties over $k$ and a smooth complete intersection $V\subset Y$ such that ${f(V)=Z}$ and $f|_V:V\to Z$ is birational.  Let $f_B:Y_B\to B$ and $V_B\subset Y_B$ be constructed from~$f$ and $V$ by base change by $q_H:B\to H$.

Let $E\subset B$ be the exceptional divisor of $\nu$. Fix an ample line bundle~$\sL$ on~${H\times X}$. For~$l\gg 0$, the line bundle $\nu^*\sL^{\otimes l}(-E)$ on $B$ is very ample.  Pick general elements~$D_1,\dots, D_{n-1}$ in the linear system $|\nu^*\sL^{\otimes l}(-E)|$.
These choices imply that the morphism~$\hat{f}:=q_X\circ f_B:Y_B\to X$ is flat (as a composition of flat morphisms), and that the smooth complete intersection 
$$\widehat{V}:=V_B\cap f_B^{-1}(E)\cap f_B^{-1}(D_1)\cap\dots\cap f_B^{-1}(D_{n-1})\subset Y_B$$
is such that $\hat{f}(\widehat{V})=i(Z)$ and $\hat{f}|_{\widehat{V}}:\widehat{V}\to i(Z)$ is birational.  It follows that the subvariety~$i(Z)\subset X$ is an sci-image.
\end{proof}

Recall that a cbs in a smooth projective variety $X$ over $k$ is a subvariety of codimension $c$ of $X$ that is the zero locus of a section of a vector bundle of rank $c$ on~$X$.

\begin{pro}
\label{cbs}
Let $i:C\hookrightarrow X$ be the inclusion of a smooth cbs.  Let $Z\subset C$ be an sci-image. Then $i(Z)\subset X$ is an sci-image.
\end{pro}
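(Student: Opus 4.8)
The plan is to argue by induction on the codimension $c$ of the cbs $C$, using Proposition \ref{hyp} as the base case and the projectivisation of the defining bundle to lower the codimension by one at each step. Write $C=\{s=0\}$ for a regular section $s$ of a rank\nobreakdash-$c$ vector bundle $\sE$ on $X$, so that $N_{C/X}\simeq\sE|_C$. When $c=1$, the subvariety $C$ is a smooth hypersurface and the statement is exactly Proposition \ref{hyp}. For the inductive step, I would consider the projective bundle $q:\bP(\sE)\to X$ together with the tautological exact sequence $0\to\sO(-1)\to q^*\sE\to\sG\to 0$, where $\sG$ has rank $c-1$, and I would let $\bar s\in H^0(\bP(\sE),\sG)$ be the image of $q^*s$. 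Since $s$ is regular, a local computation shows that $\bar s$ is regular as well, so its zero locus $\beta:=\{\bar s=0\}$ is a smooth cbs of codimension $c-1$ in $\bP(\sE)$. Set-theoretically $\beta$ is the closure of the graph of $x\mapsto[\langle s(x)\rangle]$, and one identifies $\beta$ with the blow-up $\Bl_C X$: the map $\pi:=q|_\beta:\beta\to X$ is birational, it is an isomorphism over $X\setminus C$, and it contracts the exceptional divisor $E:=\pi^{-1}(C)\simeq\bP(\sE|_C)$ onto $C$ by the $\bP^{c-1}$-bundle projection $p:E\to C$.

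The heart of the argument is then to produce a subvariety $W\subset\beta$ that is an sci-image in $\beta$ and satisfies $\pi(W)=i(Z)$ with $\pi|_W:W\to i(Z)$ birational. Since $\pi(W)=Z\subset C$, the subvariety $W$ must lie in the exceptional divisor $E$ and map birationally to $Z$ under $p$; I would construct it as a degree\nobreakdash-one multisection of $p$ over $Z$. First, pulling back along the smooth morphism $p$ the flat morphism witnessing $Z$ as an sci-image in $C$ (exactly as base change is used in the proof of Proposition \ref{hyp}) shows that $p^{-1}(Z)=\bP(\sE|_Z)$ is an sci-image in $E$. Next, I would choose a very ample line bundle $\sL$ on $E$ of the form $\sO_{\bP(\sE|_C)}(1)\otimes p^*A$ with $A$ sufficiently ample, so that $\sL$ restricts to $\sO(1)$ on the fibres of $p$, and I would cut $p^{-1}(Z)$ with $c-1$ general members $D_1,\dots,D_{c-1}$ of $|\sL|$. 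The resulting $W:=p^{-1}(Z)\cap D_1\cap\dots\cap D_{c-1}$ remains an sci-image in $E$ (the general sections cut down the smooth complete intersection witnessing $p^{-1}(Z)$, as in the proof of Proposition \ref{prosmoothing}), and over a general point of $Z$ it meets each fibre $\bP^{c-1}$ of $p$ in a single point; hence $p|_W:W\to Z$ is birational.

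To conclude, I would chain together the three ambient varieties $E\subset\beta\subset\bP(\sE)$ and the flat map $q$. Since $E$ is a smooth hypersurface in $\beta$, Proposition \ref{hyp} turns the sci-image $W\subset E$ into an sci-image $W\subset\beta$. Since $\beta$ is a smooth cbs of codimension $c-1$ in $\bP(\sE)$, the inductive hypothesis turns $W\subset\beta$ into an sci-image $W\subset\bP(\sE)$. Finally $q:\bP(\sE)\to X$ is flat and $q|_W=\pi|_W$ is birational onto $i(Z)$, so composing the witnessing flat morphism with $q$ exhibits $i(Z)=q(W)$ as an sci-image in $X$, exactly as flatness of the projection $X\times\bP^N_k\to X$ is used at the end of \S\ref{flatci}.

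The step I expect to be the main obstacle is the geometric input of the inductive step: identifying $\Bl_C X$ with the smooth codimension\nobreakdash-$(c-1)$ cbs $\{\bar s=0\}\subset\bP(\sE)$ and verifying the regularity of $\bar s$, together with the general\nobreakdash-position argument guaranteeing that the multisection $W$ has degree exactly one over $Z$ while remaining an sci-image. Everything else is a formal combination of Proposition \ref{hyp}, the inductive hypothesis, and the stability of sci-images under composition with flat morphisms.
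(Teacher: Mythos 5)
Your proposal is correct and takes essentially the same route as the paper's proof: the same identification of $\Bl_C X$ with the smooth codimension-$(c-1)$ cbs cut out in $\bP(\sE)$ by the section induced by $q^*s$, the same base change to get $p^{-1}(Z)$ as an sci-image, the same induction on codimension through Proposition \ref{hyp}, and the same final flat pushforward along $q:\bP(\sE)\to X$. The only (cosmetic) difference is the order of the cutting step: you cut down to a degree-one multisection $W$ inside the exceptional divisor $E$ before lifting through $E\subset\beta\subset\bP(\sE)$, whereas the paper first lifts the full preimage $\pi^{-1}(i(Z))$ to $\bP(\sE)$ and only then cuts the witnessing complete intersection with $c-1$ general members of $|\pi^*\sL^{\otimes l}\otimes\sO_{\bP(\sE)}(1)|$ to achieve birationality; both orderings work for the same Bertini-type reasons.
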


\begin{proof}
Let $c$ be the codimension of $C$ in $X$. We argue by induction on $c$.  We have already dealt with the case~$c=1$ in Proposition \ref{hyp}, so we may assume that $c\geq 2$.

Write $C=\{s=0\}$, where $\sE$ is a vector bundle of rank $c$ on $X$ and  $s\in H^0(X,\sE)$.  
Let $\pi:\bP(\sE)\to X$ be the projective bundle parametrizing lines in $\sE$.  Consider the short exact sequence
$$0\to \sL\to\pi^*\sE\to\sF\to 0$$
of vector bundles on $\bP(\sE)$, where $\sL$ is the tautological line subbundle of $\pi^*\sE$. The section $\pi^*s$ induces a section $s'\in H^0(\bP(\sE),\sF)$.  Its zero locus $X':=\{s'=0\}\subset\bP(\sE)$ identifies with the blowup $\nu:X'\to X$ of $X$ along $C$ and hence is a smooth cbs in~$\bP(\sE)$.  
In turn, the zero locus of $s|_{C'}\in H^0(C',\sL|_{C'})$ is precisely $\pi^{-1}(C)$.  We summarize the situation in the commutative diagram
\begin{equation}
\label{diag}
\begin{aligned}
\xymatrix@C=1.5em@R=3ex{
\pi^{-1}(C)\ar^(.6){a}[r]\ar_{\pi|_{\pi^{-1}(C)}}[d]&X'\ar^(.4){b}[r]\ar_(.45){\nu}[rd]&\bP(\sE)\ar^{\pi}[d] \\
C\ar_{i}[rr]&&X \rlap{}
}
\end{aligned}
\end{equation}
in which the morphisms $a$ and $b$ are the inclusions.

Since $Z\subset C$ is an sci-image,  so is $\pi^{-1}(Z)\subset \pi^{-1}(C)$ (to see it, base change the flat morphism and the smooth complete intersection appearing in the definition of an sci\nobreakdash-images by the smooth morphism $\pi|_{\pi^{-1}(C)}$).  
By the induction hypothesis (applied first to $a$ and then to $b$), the subvariety $
b\circ a(\pi^{-1}(Z))=\pi^{-1}(i(Z))$ of $\bP(\sE)$ is an sci\nobreakdash-image. 
Consequently, there exist a flat morphism $f:Y\to \bP(\sE)$ of smooth projective varieties over $k$ and a smooth complete intersection $V\subset Y$ such that $f(V)=\pi^{-1}(i(Z))$ and~$f|_V:V\to \pi^{-1}(i(Z))$ is birational.

Now let $\sL$ be an ample line bundle on $X$. For $l\gg 0$, the line bundle $\pi^*\sL^{\otimes l}\otimes \sO_{\bP(\sE)}(1)$ on $\bP(\sE)$ is very ample.  Pick general elements $D_1,\dots, D_{c-1}$ in the associated  linear system $|\pi^*\sL^{\otimes l}\otimes \sO_{\bP(\sE)}(1)|$.
These choices imply that the morphism $\hat{f}:=\pi\circ f:Y\to X$ is flat,  and that the smooth complete intersection 
$$\widehat{V}:=V\cap f^{-1}(D_1)\cap\dots\cap f^{-1}(D_{c-1})\subset Y$$
is such that $\hat{f}(\widehat{V})=i(Z)$ and $\hat{f}|_{\widehat{V}}:\widehat{V}\to i(Z)$ is birational.  We conclude that the subvariety $i(Z)\subset X$ is an sci-image.
\end{proof}

We finally reach the goal of this section, which was stated as Proposition \ref{pro1} in the introduction.

\begin{pro}
\label{blowup}
Let $\nu:X'\to X$ be the blow-up of a smooth cbs.  Let $Z'\subset X'$ be an sci-image. If $\nu|_{Z'}:Z'\to\nu(Z')$ is birational, then $\nu(Z')\subset X$ is an sci-image.
\end{pro}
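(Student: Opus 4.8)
The plan is to factor the blow-up $\nu$ through a projective bundle and then reuse Proposition~\ref{cbs}, rather than arguing with $X'$ directly. Write $C=\{s=0\}$ for the smooth cbs being blown up, where $\sE$ is a vector bundle of rank $c$ on $X$ and $s\in H^0(X,\sE)$. If $c=1$ then $C$ is a smooth divisor, $\nu$ is an isomorphism, and $\nu(Z')=Z'$ is an sci-image with nothing to prove, so I may assume $c\geq 2$. As in the proof of Proposition~\ref{cbs} (see diagram~(\ref{diag})), the blow-up $X'$ is realized as a smooth cbs $b:X'\hookrightarrow\bP(\sE)$ inside the projective bundle $\pi:\bP(\sE)\to X$, in such a way that $\nu=\pi\circ b$. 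This factorization expresses $\nu$ as a closed immersion of a smooth cbs followed by the smooth morphism $\pi$, which is exactly what lets me invoke the functoriality already established.

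First I would push $Z'$ forward along $b$. Since $Z'\subset X'$ is an sci-image and $b$ is the inclusion of a smooth cbs in $\bP(\sE)$, Proposition~\ref{cbs} shows that $b(Z')\subset\bP(\sE)$ is an sci-image. Next I would observe that $\pi|_{b(Z')}:b(Z')\to Z:=\nu(Z')$ is birational: indeed $b$ is an embedding, so $b|_{Z'}:Z'\to b(Z')$ is an isomorphism, and $\pi|_{b(Z')}\circ b|_{Z'}=\nu|_{Z'}$ is birational onto $Z$ by hypothesis.

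The final step is to push the sci-image $b(Z')$ forward along the smooth morphism $\pi$; because of the birationality just noted, this requires no modification of the complete intersection. Concretely, I would choose a flat morphism $f:Y\to\bP(\sE)$ of smooth projective varieties and a smooth complete intersection $V\subset Y$ with $f(V)=b(Z')$ and $f|_V:V\to b(Z')$ birational. Then $\hat f:=\pi\circ f:Y\to X$ is flat (as $\pi$ is smooth, hence flat), one has $\hat f(V)=\pi(b(Z'))=Z$, and $\hat f|_V=\pi|_{b(Z')}\circ f|_V:V\to Z$ is birational as a composition of birational morphisms. Since $V\subset Y$ is still a smooth complete intersection in a smooth projective variety, this would exhibit $Z\subset X$ as an sci-image, as desired.

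Unlike Propositions~\ref{hyp} and~\ref{cbs}, where pushing forward along a closed immersion forced an enlargement of the complete intersection by general divisors to cut the image down to the correct dimension, here post-composing with the smooth surjection $\pi$ keeps $Y$ and $V$ unchanged, so no such enlargement is needed. For this reason I do not anticipate a serious obstacle. The two points to check with care are the birationality of $\pi|_{b(Z')}$—which is precisely where the hypothesis that $\nu|_{Z'}$ is birational is used—and the preservation of flatness under composition, which holds because $\pi$ is smooth.
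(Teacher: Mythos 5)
Your proposal is correct and follows essentially the same route as the paper: factor $\nu=\pi\circ b$ through $\bP(\sE)$ as in diagram~(\ref{diag}), apply Proposition~\ref{cbs} to the smooth cbs $b:X'\hookrightarrow\bP(\sE)$ to make $b(Z')$ an sci-image, and then compose the resulting flat morphism with the flat projection $\pi$, using the hypothesis that $\nu|_{Z'}$ is birational to keep the map to $\nu(Z')$ birational. Your separate treatment of the case $c=1$ and the remark that no enlargement of the complete intersection is needed are harmless additions, not deviations.
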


\begin{proof}
Denote by $i:C\hookrightarrow X$ the inclusion of the blown up cbs and keep the notation of (\ref{diag}).  Since $Z'\subset X'$ is an sci-image, Proposition \ref{cbs} implies that so is~$b(Z')\subset\bP(\sE)$.  
Consequently, there exist a flat morphism $f:Y\to \bP(\sE)$ of smooth projective varieties over $k$ and a smooth complete intersection $V\subset Y$ such that $f(V)=b(Z')$ and~${f|_{V}:V\to b(Z')}$ is birational.
Since $\pi$ is flat and since $\pi|_{b(Z')}:b(Z')\to \nu(Z')$ is birational,  the flat morphism $\pi\circ f :Y\to X$ and the smooth complete intersection~${V\subset Y}$ 
certify that the subvariety~$\pi\circ b(Z')=\nu(Z')$ of $X$ is an sci-image.
\end{proof}

\section{Turning smooth subvarieties into smooth complete intersections}
\label{s3}

The  goal of this section is to prove the next proposition (see \cite[Theorem 4.2]{KV}).

\begin{pro}
\label{pro3}
Let $X$ be a smooth projective variety of dimension $n$ over $k$. Let~$Z\subset X$ be a smooth subvariety of dimension $d<\frac{n}{4}$. Then there exists a composition $\pi:X'\to X$ of blow-ups of smooth cbs of dimension $<d$ such that the strict transform~$Z'\subset X'$ of $Z$ is a union of connected  components of a smooth cbs $C\subset X'$.
\end{pro}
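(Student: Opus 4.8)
The plan is to reformulate the conclusion as the existence, after suitable blow-ups, of a vector bundle $\sE$ of rank $c:=n-d$ on $X'$ and of a section $s\in H^0(X',\sE)$ whose zero scheme $\{s=0\}$ is smooth and admits the strict transform $Z'$ as a union of connected components. Indeed, that $\{s=0\}$ be smooth and have $Z'$ as a reduced isolated component amounts to the two requirements that $\sE|_{Z'}$ be isomorphic to the normal bundle $N_{Z'/X'}$ and that $s$ vanish transversally along $Z'$; that the remaining components be disjoint smooth pieces is a transversality condition away from $Z'$. So everything hinges on extending the normal bundle of (the transform of) $Z$ to an \emph{honest} vector bundle on the ambient variety, and on producing a global section realizing this extension. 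The crucial piece of numerology is that $d<\frac{n}{4}$ forces $c=n-d>3d=3\dim(Z)$: the normal bundle has very large rank compared with $\dim(Z)$.

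First I would exploit this largeness. Because $\rk N_{Z/X}=c>\dim(Z)$, a Serre-type construction (adapted to high codimension) is available: after twisting by a sufficiently ample line bundle on $X$ to achieve global generation, one uses a presentation of $\sI_Z$ and the abundance of sections coming from $c>\dim(Z)$ to build a coherent sheaf $\sF$ on $X$, together with a section $s$, such that near $Z$ the sheaf $\sF$ is locally free with $\sF|_Z\cong N_{Z/X}$ and $s$ cuts out $Z$ transversally. The point of allowing a general bundle of rank $c$, rather than a split one, is the extra flexibility in matching $\sF|_Z$ with $N_{Z/X}$. The price is twofold: away from $Z$ the sheaf $\sF$ need not be locally free, and the zero scheme of $s$ acquires a residual locus $W$ (a linkage phenomenon entirely analogous to Hironaka's, recalled in \S\ref{lowd}), which need neither be smooth nor disjoint from $Z$.

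Next I would clean everything up by blowing up. The loci where things go wrong --- where $\sF$ fails to be locally free, where $s$ fails to be transverse, where $W$ is singular, and where $W$ meets $Z$ --- can, by general position, be arranged to be smooth cbs of dimension $<d$. Blowing them up repeatedly converts the coherent sheaf into a genuine vector bundle, makes the section transverse, and separates the residual components from the transform of $Z$, so that after finitely many steps $Z'$ becomes a union of connected components of the smooth zero locus $C=\{s=0\}$, the desired smooth cbs. Here it is essential that all permitted centres have dimension $<d$, mirroring the fact that $Z$ itself, of dimension exactly $d$, may never be blown up; the strict transform $Z'$ is then an iterated blow-up of $Z$ along smooth centres, still smooth and birational to $Z$.

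The hard part will be the interaction between the two requirements: extending $N_{Z/X}$ to a true vector bundle while retaining a global section meeting the zero section transversally and whose residual locus can be made smooth and disjoint. This is exactly where the strong hypothesis $d<\frac{n}{4}$, rather than the naive Whitney bound $d<\frac{n}{2}$, is forced. Mere disjointness of $W$ from $Z$ costs only $2d<n$; the extra factor of two reflects that guaranteeing clean behaviour of the constrained section requires putting in general position not $Z$ alone but an auxiliary variety of dimension about $2d$ (built from a point of $Z$ together with a normal/secant direction), and a Whitney-type genericity statement for a variety of dimension $\sim 2d$ inside the $n$-dimensional ambient space needs precisely $2d<\frac{n}{2}$. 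Verifying that all the resulting bad loci are indeed smooth cbs of dimension $<d$, so that every blow-up stays within the class allowed by the statement, is the delicate bookkeeping that lies at the technical heart of the argument.
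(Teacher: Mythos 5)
Your opening reformulation is just the definition of a cbs unwound, so all the content of the proposition is concentrated in your two subsequent claims, and both have genuine gaps. The first is the assertion that, because $c=n-d>3d$, a ``Serre-type construction adapted to high codimension'' produces a sheaf $\sF$ with $\sF|_Z\cong N_{Z/X}$ and a section cutting out $Z$ transversally near $Z$. No such construction exists: Serre's construction is specific to codimension $2$ (rank $2$), where the bundle arises from an extension class in an identifiable $\mathrm{Ext}^1$ group and the essential obstruction is the extendability of $\det N_{Z/X}$; in higher codimension, large rank does not help, no obstruction theory produces the bundle, and the question of realizing subvarieties as zero loci of sections of vector bundles (even after blow-ups) is exactly the open heart of the matter --- see \cite[Question~4.1]{KV} and the discussion before Proposition \ref{pro2}. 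Writing ``Serre-type construction'' here names the difficulty rather than resolving it.

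The second gap is the cleanup step: you assert that the loci where $\sF$ fails to be locally free, where $s$ fails to be transverse, where the residual locus $W$ is singular and where $W$ meets $Z$ ``can, by general position, be arranged to be smooth cbs of dimension $<d$''. There is no genericity available: $\sF$ and $s$ are constrained along $Z$, the non-locally-free locus of a coherent sheaf and the singular locus of a linkage residual can be essentially arbitrary closed subsets, and, above all, being a cbs is a global property that cannot be arranged by a dimension count --- it is precisely the property the proposition is trying to establish, so your argument begs the question. This is what the actual proof spends nearly all of its effort on. Koll\'ar and Voisin never extend $N_{Z/X}$ abstractly; they build the bundle out of line bundles by taking successive general hypersurface sections containing $Z$ (the descending induction on the dimension $e$ of a cbs $Y_e\supset Z$ in Step \ref{step2} of \S\ref{simplified}), accept the ordinary quadratic singularities along a locus $\Sigma$ of dimension $<d$ that Lemma \ref{quadsing} says must appear once $e<2d$, turn $\Sigma$ into components of a smooth cbs by induction on $d$, and blow it up; the engine is Lemma \ref{encorecbs} (ii), which says that the strict transform of a cbs with ordinary quadratic singularities along the blown-up locus is again a cbs, via a nontrivial modification of $\pi^*\sE(-E)$. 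Even then, generic choices do not suffice to keep the blow-up centers in good position (\cite[Remark 4.7]{KV}); this forces the twin inductive statements (Propositions \ref{pro4} and \ref{pro5}) with auxiliary varieties and full intersection conditions. It is there --- not in a Whitney-type count for a $2d$-dimensional incidence variety, as you suggest --- that $d<\frac{n}{4}$ enters: one needs general complete intersections containing $Z$ (and $\Sigma$) that are smooth of dimension $<\frac{n}{2}$, which by Lemma \ref{quadsing} is possible precisely when $d<\frac{n}{4}$.
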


Note that Proposition \ref{pro3} seems slightly weaker than what was promised in the introduction (see Proposition \ref{pro2}),  in that $C$ is only required to be a union of connected components of a smooth cbs, and not a smooth complete intersection.  This issue can be resolved by a very simple blow-up trick.

\vspace{.7em}

\begin{proof}[Proof of Proposition \ref{pro2}]
Let the morphism $\pi:X'\to X$ and the subvarieties $Z'$ and~$C$ of $X'$ be as in Proposition \ref{pro3}. Let~${\nu:X^+\to X'}$ be the blow-up of $C$ and let~$E\subset X^+$ be the union the exceptional divisors of $\nu$ that lie above $Z'$.
Fix an ample line bundle~$\sL$ on~$X'$. For~$l\gg 0$, the line bundle $\nu^*\sL^{\otimes l}(-E)$ on~$X^+$ is globally generated. Pick general elements $D_1,\dots, D_{c-1}$ in the linear system $|\nu^*\sL^{\otimes l}(-E)|$, where $c$ is the codimension of~$Z$ in $X$ (and hence of $Z'$ in $X'$).
The morphism $\pi^+:=\pi\circ\nu:X^+\to X$ and the smooth complete intersection
$$V:=E\cap D_1\cap\dots\cap D_{c-1}\subset X^+$$
then have the properties requested in the statement of Proposition \ref{pro2}.
\end{proof}

In \S\ref{simplified},  we describe a strategy to prove Proposition \ref{pro3}. This outline
will turn out to be insufficient, and we will explain in \S\ref{parmieux} and \S\ref{inductive} how this strategy must be modified to give rise to a rigorous proof.  The hypothesis that $Z$ has dimension $<\frac{n}{4}$ will only play a role in \S\ref{parmieux} and \S\ref{inductive}. The proofs of a few lemmas are postponed to \S\ref{parlemmas}.

\subsection{A simplified sketch of the proof of Proposition \ref{pro3}}
\label{simplified}

\begin{Step}[Induction on $d$]
We argue by induction on $d$. The case when~${d=0}$ is obvious: it is not even necessary to blow up $X$. From now on,  fix~$d\geq 1$.

During the whole proof we will repeatedly blow up $X$ along smooth cbs of dimension~${<d}$ and replace~$Z$ with its strict transform in these blow-ups.  To avoid creating singularities on $Z$, we always assume that the connected components of the blown-up cbs are either included in $Z$ or disjoint from $Z$.
The arguments given in this paragraph are not strong enough to ensure that.  This is one of the reasons why they will need to be modified as described in \S\ref{parmieux} and \S\ref{inductive}.
\end{Step}

\begin{Step}[Induction on $e$]
\label{step2}
Fix $d\leq e\leq n$.  We claim that, after maybe having repeatedly blown up $X$ along smooth cbs and replaced $Z$ by its strict transform,  there exists a smooth cbs $Y_e\subset X$ of dimension~$e$ that contains $Z$.  This claim is proved by decreasing induction on $e$.  The case $e=n$ is obvious: just take $Y_n=X$. The case~$e=d$ is exactly the conclusion of Proposition \ref{pro3} that we are trying to reach.
\end{Step}

\begin{Step}[A hyperplane section]
\label{step3}
To perform the induction step of the proof of the claim, we assume that $Z\subset Y_{e+1}\subset X$ has been constructed for some $d\leq e< n$.  
Choose a sufficiently ample line bundle $\sL$ on $X$.  Let $s\in H^0(X,\sL)$ be general section vanishing on $Z$ and set~$H:=\{s=0\}$. Then $H\subset X$ is a hypersurface containing $Z$.  Define~$Y_{e}:=Y_{e+1}\cap H$. 

The proof is not yet complete because the subvariety $Y_{e}$ may have singularities.
Local computations (more precisely: Lemma~\ref{quadsing} applied to the variety~$Y_{e+1}$) allow one to describe them precisely.
Let~$\Sigma\subset Y_{e}$ denote the singular locus of $Y_{e}$.  Note that~$\Sigma\subset Z$ by the Bertini theorem.
 If~$e\geq 2d$, then $\Sigma=\varnothing$ and the proof is finished. 
Otherwise, the variety~$\Sigma$ is smooth of dimension $2d-e-1$ (in particular, of dimension~$<d$)
and~$Y_{e}$ has ordinary quadratic singularities along $\Sigma$. 
\end{Step}

\begin{Step}[Making use of the induction hypothesis]
\label{step4}
This description of the singularities of $Y_e$ shows that, to render $Y_e$ smooth, it would suffice to blow up $\Sigma$. However, we are only allowed to blow up smooth cbs and not arbitrary smooth subvarieties. 

To overcome this obstacle, we apply the induction hypothesis on $d$.  After repeatedly blowing up smooth cbs of dimension $<\dim(\Sigma)$ (whose components we assume, for the purpose of this simplified sketch of proof,  to be either included in $\Sigma$ or disjoint from~$Y_{e+1}$)
and replacing all the varieties in sight with their strict transforms in these blow-ups, we may assume that $\Sigma\subset X$ is a union of connected components of a smooth~cbs.

It is easy to check that, after this sequence of blow-ups, the equality $Y_{e}=Y_{e+1}\cap H$ still holds.
One further verifies that the subvariety $Y_{e+1}\subset X$ is still a cbs (see Lemma~\ref{encorecbs} (i)) and hence that so is $Y_{e}=Y_{e+1}\cap H$, and that $Y_e$ still has ordinary quadratic singularities along~$\Sigma$ (see Lemma \ref{encorequad}).
\end{Step}

\begin{Step}[Blowing up the singular locus]
\label{step5}
Let $C\subset X$ be the smooth cbs of which~$\Sigma$ is a union of connected components.  Again, we assume for the purpose of this simplified sketch of proof that $C$ only intersects $Y_e$ along $\Sigma$.
Blowing up $C$ in $X$ and replacing~$Y_e$ with its strict transform in this blow-up then completes the proof. Indeed,  our choice of blow-up center implies that the subvariety~$Y_e\subset X$ is now a smooth cbs (see Lemma~\ref{encorecbs}~(ii)).
\end{Step}

\subsection{Controlling the blown-up loci}
\label{parmieux}

At several points in the sketch of proof described in \S\ref{simplified},  we took for granted that our blow-up centers
were in good position with respect to various other subvarieties~$V_i\subset X$ of
importance in the proof. It is for this reason, and for this reason alone, that this sketch is incomplete.

Following \cite[Definition 4.3]{KV}, we say that a subvariety $C\subset X$ has \textit{full intersection} with another subvariety $V\subset X$ if all the connected components of $C$ are either included in $V$ or disjoint from $V$.  The key to fixing the proof will be to ensure that our blow-up centers have full intersection with the relevant subvarieties $V_i\subset X$.
One could hope that if the choices made in \S\ref{simplified} are generic, then the blown up loci satisfy the required full intersection conditions.
Unfortunately, as noted in~\mbox{\cite[Remark 4.7]{KV}},  this is not~clear.

The solution of Koll\'ar and Voisin to this difficulty is radical. Instead of making generic choices  and hoping for the best,  they make particular choices.  To be precise, they constrain the blown up loci inside auxiliary smooth subvarieties of $X$ that are as disjoint as possible from the subvarieties $V_i$ of $X$ that are relevant to the proof.  This forces the blow-up centers to have full intersection with the $V_i$.  

This strategy can only be made to work if the $V_i$ and the auxiliary varieties all have dimension~$<\frac{n}{2}$ (they will then be as disjoint as possible from each other for purely dimensional reasons).  Since the auxiliary varieties will be constructed as general complete intersections containing $\Sigma$ (which is smooth of dimension~$<d$),  their smoothness is ensured if $d<\frac{n}{4}$ (by Lemma \ref{quadsing}). This 
explains why this hypothesis appears the statement of Proposition \ref{pro3}.

\subsection{The inductive statements}
\label{inductive}

In practice,  the resulting enhancement of Proposition \ref{pro3} is stated as the combination of two twin propositions (see \cite[Properties 4.6 and 4.8]{KV}), which are proved simultaneously by induction on $d$.   Their statements is complicated by the necessity of keeping track of all the auxiliary varieties that have been introduced at various stages of the induction process.

As before, we let $X$ be a smooth projective variety of dimension $n$ over $k$.

\begin{pro}
\label{pro4}
Let $Z\subset Y\subset X$ and $Z\subset W_j\subset X$ be subvarieties.  Suppose that~$Z$ is smooth of dimension $d<\frac{n}{4}$,  that the $W_j$ are smooth of dimension $<\frac{n}{2}$ and that~$Y$ is a smooth cbs of dimension $<\frac{n}{2}$.  

Then there exist a composition $\pi:X'\to X$ of blow-ups of smooth cbs of dimension~${<d}$ that have full intersection with the strict transforms of $Z$, 
of $Y$ and of the~$W_j$, and a smooth cbs $C\subset X'$,  such that $Z'\subset C\subset Y'$ and $Z'$ is a union of connected components of $C$ (where $Z'$ and $Y'$ denote the strict transforms of $Z$ and $Y$ in $X'$).
\end{pro}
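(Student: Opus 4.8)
The plan is to prove Proposition \ref{pro4} by a double induction: an outer induction on $d$, and, for fixed $d$, a \emph{decreasing} induction on an integer $e$. The case $d=0$ is straightforward and requires no blow-up (a general zero-dimensional cbs of $Y$ through the finitely many points of $Z$ realizes $Z$ as a union of connected components of a smooth cbs contained in $Y$). For the inductive step, fix $d\geq 1$ and assume the proposition for all smaller values of $d$. Starting from $Y_{\dim(Y)}:=Y$, I will construct, for each $e$ descending from $\dim(Y)$ to $d$, a composition of blow-ups of smooth cbs of dimension $<d$ together with a smooth cbs $Y_e$ of dimension $e$ satisfying $Z'\subset Y_e\subset Y'$, the blow-up centers always having full intersection with the strict transforms of $Z$, of $Y$ and of the $W_j$. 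Since $\dim(Y_d)=d=\dim(Z')$ and $Z'$ is smooth, the terminal cbs $Y_d$ has $Z'$ as a union of connected components, so $Y_d$ is the sought-after $C$.

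For the descending step, suppose $Z'\subset Y_{e+1}\subset Y'$ has been built, with $Y_{e+1}$ a smooth cbs of dimension $e+1$. Following Step \ref{step3} of \S\ref{simplified}, I choose a sufficiently ample $\sL$ on $X$, a general section $s\in H^0(X,\sL)$ vanishing on $Z$, and set $H:=\{s=0\}$ and $Y_e:=Y_{e+1}\cap H$. By Lemma \ref{quadsing} applied to $Y_{e+1}$, the variety $Y_e$ is smooth when $e\geq 2d$, and otherwise has ordinary quadratic singularities exactly along a \emph{smooth} locus $\Sigma\subset Z$ of dimension $2d-e-1\leq d-1<d$. In the first case the step is complete; the work lies in removing the singularities in the second case, where the key fact is $\dim(\Sigma)<d$.

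To resolve these singularities I apply the induction hypothesis on $d$ to $\Sigma$. Concretely, I build an auxiliary smooth cbs $Y_\Sigma$ of dimension $<\frac n2$ containing $\Sigma$ (a general complete intersection through $\Sigma$, smooth by Lemma \ref{quadsing} since $\dim(\Sigma)<\frac n4$), and invoke Proposition \ref{pro4} for $\Sigma$, with $Y_\Sigma$ as ambient cbs and the strict transforms of $Z$, of $Y_{e+1}$ and of the $W_j$ as the family of (smooth, dimension-$<\frac n2$) subvarieties to protect — all of which contain $\Sigma$. This produces further blow-ups of smooth cbs of dimension $<d$, after which $\Sigma$ becomes a union of connected components of a smooth cbs $C_\Sigma$, the centers having full intersection with everything tracked. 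By Lemma \ref{encorecbs}~(i), $Y_{e+1}$ remains a smooth cbs and the identity $Y_e=Y_{e+1}\cap H$ persists with unchanged singularities (Lemma \ref{encorequad}). Provided $C_\Sigma$ meets $Y_e$ only along $\Sigma$ — which the confinement strategy below is designed to guarantee — blowing up $C_\Sigma$ resolves $Y_e$ along $\Sigma$ and leaves it untouched elsewhere, so its strict transform is a smooth cbs of dimension $e$ by Lemma \ref{encorecbs}~(ii). This completes the descending step, hence the induction.

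The genuine difficulty, and the reason the statement is burdened with the family $\{W_j\}$, lies entirely in enforcing the full-intersection conditions — including the delicate requirement that $C_\Sigma$ meet the singular variety $Y_e$ only along $\Sigma$: as noted in \cite[Remark 4.7]{KV}, generic choices are not known to suffice. The remedy is the confinement strategy of \S\ref{parmieux}, in which each blow-up center is constrained to lie inside an auxiliary smooth subvariety chosen as disjoint as possible from the varieties one wishes to protect. This is viable precisely because every relevant subvariety — the $W_j$, the cbs $Y$, and the auxiliary complete intersections through $\Sigma$ — has dimension $<\frac n2$, forcing them to meet minimally, and because $\dim(\Sigma)<d<\frac n4$ keeps those auxiliary complete intersections smooth (Lemma \ref{quadsing}); this is the one place where the hypothesis $d<\frac n4$ is used. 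Carrying this bookkeeping coherently through the recursion is what dictates, in \cite{KV}, coupling Proposition \ref{pro4} with a twin statement (\cite[Property 4.8]{KV}) and establishing both by a single induction on $d$.
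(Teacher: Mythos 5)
Your proposal is correct and takes essentially the same approach as the paper: your inner decreasing induction on $e$ is exactly the paper's twin statement (Proposition \ref{pro5}) inlined, and your descending step (general hypersurface through $Z$, Lemma \ref{quadsing}, the auxiliary complete intersection $Y_\Sigma=\overline{Y}$ confining $C_\Sigma$, the induction hypothesis for smaller $d$, then Lemmas \ref{encorequad} and \ref{encorecbs}) reproduces the paper's deduction of Proposition \ref{pro5} from Proposition \ref{pro4} in lower dimension. The one bookkeeping point you gloss over --- after the first descent the protected family should also contain the strict transform of the \emph{original} $Y$, not just $Y_{e+1}$, so that the full-intersection clause in the conclusion holds for $Y$ itself --- is glossed over identically in the paper's own iteration of Proposition \ref{pro5}, and is fixed by simply adding $Y'$ to the collection of the $W_j$.
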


\begin{pro}
\label{pro5}
Let $Z\subset Y\subset X$ and $Z\subset W_j\subset X$ be subvarieties.  Suppose that~$Z$ is smooth of dimension $d<\frac{n}{4}$,  that the $W_j$ are smooth of dimension $<\frac{n}{2}$ and that~$Y$ is a smooth cbs of dimension $<\frac{n}{2}$.  Suppose moreover that $Y$ has dimension $>d$.

Then there exist a composition $\pi:X'\to X$ of blow-ups of smooth cbs of dimension~${<d}$ that have full intersection with the strict transforms of $Z$, 
of $Y$ and of the $W_j$, and a smooth cbs $C\subset X'$, such that $Z'\subset C\subset Y'$ and $\dim(C)=\dim(Y')-1$ (where~$Z'$ and $Y'$ denote the strict transforms of $Z$ and $Y$ in $X'$).
\end{pro}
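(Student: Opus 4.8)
The plan is to prove Propositions \ref{pro4} and \ref{pro5} simultaneously by induction on $d$, and to view Proposition \ref{pro5} as the single geometric step that lowers the dimension of the ambient cbs by one at the cost of resolving the singularities this creates. Throughout, I may assume that both propositions are already known for every value of the cycle dimension strictly smaller than $d$; this is legitimate because the singular locus that appears below will have dimension $<d$, exactly thanks to the extra hypothesis $\dim(Y)>d$.

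First I would take a hyperplane section. Choose a sufficiently ample line bundle $\sL$ on $X$ and a general section $s\in H^0(X,\sL)$ vanishing on $Z$, and set $H:=\{s=0\}$, so that $Z\subset H$. Since $Y$ is a smooth cbs and $H$ is the zero locus of a section of a line bundle, the subvariety $C_0:=Y\cap H$ is again a cbs, of dimension $\dim(Y)-1$, and satisfies $Z\subset C_0\subset Y$. Choosing $H$ general among sections through $Z$ lets me apply Lemma \ref{quadsing} to $Y$ and describe the singularities of $C_0$ completely: its singular locus $\Sigma$ is contained in $Z$, is smooth, and $C_0$ has ordinary quadratic singularities along $\Sigma$. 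A dimension count gives $\dim(\Sigma)=2d-\dim(Y)$, so the hypothesis $\dim(Y)>d$ forces $\dim(\Sigma)<d$, while the hypothesis $d<\frac{n}{4}$ is what guarantees, again through Lemma \ref{quadsing}, that $\Sigma$ is genuinely smooth. If $\dim(Y)\geq 2d+1$ then $\Sigma=\varnothing$, the cbs $C_0$ is already smooth, and there is nothing left to do; so I may assume $\Sigma\neq\varnothing$.

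Next I would resolve $C_0$ by blowing up $\Sigma$ — but only after turning $\Sigma$ into a union of connected components of a smooth cbs, which is exactly the conclusion of Proposition \ref{pro4} applied to $\Sigma$, available here because $\dim(\Sigma)<d$. To invoke it I must supply the auxiliary data of that proposition: a smooth cbs of dimension $<\frac{n}{2}$ containing $\Sigma$ (take a general complete intersection through $\Sigma$, which is smooth by Lemma \ref{quadsing} since $\Sigma$ is smooth of dimension $<d<\frac{n}{4}$), together with a list of smooth subvarieties of dimension $<\frac{n}{2}$ containing $\Sigma$ with which the blow-up centers must have full intersection. For the latter I would feed in $Z$, $Y$ and the given subvarieties $W_j$ (all of which contain $\Sigma$ and are smooth of dimension $<\frac{n}{2}$). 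The induction then produces a composition of blow-ups of smooth cbs of dimension $<\dim(\Sigma)<d$, whose centers have full intersection with the strict transforms of $\Sigma$, $Z$, $Y$ and the $W_j$, after which the strict transform of $\Sigma$ is a union of connected components of a smooth cbs $C_\Sigma$. By Lemmas \ref{encorecbs}(i) and \ref{encorequad}, the strict transforms of $Y$ and of $C_0$ remain, respectively, a smooth cbs and a cbs with ordinary quadratic singularities along $C_\Sigma$. Blowing up $C_\Sigma$ finally resolves these quadratic singularities: by Lemma \ref{encorecbs}(ii) the strict transform $C$ of $C_0$ becomes a smooth cbs, and it still satisfies $Z'\subset C\subset Y'$ with $\dim(C)=\dim(Y')-1$, which is the assertion.

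The main obstacle, and the reason the auxiliary varieties appear in the statements at all, is the control of the full intersection conditions discussed in \S\ref{parmieux}: one cannot simply hope that generic blow-up centers meet $Z$, $Y$ and the $W_j$ in whole components. Instead I would enforce this by constraining every blow-up center to lie inside the auxiliary cbs built around $\Sigma$, chosen to be as disjoint as possible from the other relevant subvarieties. Here the dimension bounds $<\frac{n}{2}$ are indispensable, since they make the various subvarieties meet only in the unavoidable loci for purely dimensional reasons; this is what yields the full intersection properties and lets the final blow-up of $C_\Sigma$ remove the singularities of $C_0$ without introducing new ones on the strict transforms of $Z$, $Y$ or the $W_j$.
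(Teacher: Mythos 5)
Your proposal is correct and follows essentially the same route as the paper's proof: a general hypersurface through $Z$ cutting $Y$ down to $C_0$ with smooth singular locus $\Sigma$ of dimension $<d$, an auxiliary general complete intersection $\overline{Y}\supset\Sigma$ smooth of dimension $<\frac{n}{2}$, an application of Proposition \ref{pro4} (for the smaller dimension $\dim\Sigma$) to $\Sigma\subset\overline{Y}\subset X$ with $Z$, $Y$ and the $W_j$ added to the auxiliary collection, and a final blow-up of $C_\Sigma$ resolved by Lemma \ref{encorecbs}~(ii). Only two cosmetic slips: the smoothness of $\Sigma$ follows from Lemma \ref{quadsing} alone (the hypothesis $d<\frac{n}{4}$ is needed only to choose $\overline{Y}$ smooth), and the strict transform of $C_0$ has ordinary quadratic singularities along the strict transform of $\Sigma$ rather than along all of $C_\Sigma$ --- harmless, since the full intersection property (via the scheme-theoretic equalities $\overline{Y}\cap Y=\Sigma$, $\overline{Y}\cap W_j=\Sigma$) makes the other components of $C_\Sigma$ disjoint from it.
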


Proposition \ref{pro3} follows from Proposition \ref{pro4} applied with no $W_j$ and with $Y$ a general complete intersection containing $Z$ (which may be chosen smooth thanks to Lemma \ref{quadsing}). 

Proposition \ref{pro4} (for a fixed $d$) is an immediate consequence of Proposition \ref{pro5} (for the same $d$). 
 To see it, apply Proposition \ref{pro5} and replace $X$ with $X'$, the $W_j$ and~$Z$ with their strict transforms in $X'$, and $Y$ with $C$. In this way the dimension of~$Y$ drops. Repeat this process until $\dim(Y)=d$ and the conclusion of Proposition \ref{pro4} is reached.  

We finally explain how Proposition \ref{pro5} (for a fixed $d$) can be deduced from Proposition~\ref{pro4} (for lower values of $d$). The argument is (a rigorous variant of) the induction step on the parameter $e$ that was presented in Steps \ref{step3}, \ref{step4} and \ref{step5} of \S\ref{simplified}.  Let $H\subset X$ be a general hypersurface containing $Z$ and set $C:=Y\cap H$.  The singular locus $\Sigma$ of~$C$ is a smooth subvariety of dimension $<d$ of $Z$ (see Lemma \ref{quadsing}). 
Our plan is to turn $\Sigma$ into a union of components of a smooth cbs and then to blow up this smooth~cbs.

As we explained in \S\ref{parmieux},  the difficulty is to control the location in $X$ of the other components of this smooth cbs. To this effect, we introduce an auxiliary variety: a general complete intersection $\overline{Y}\subset X$ containing $\Sigma$, chosen smooth of dimension~$<\frac{n}{2}$ (this is possible by Lemma \ref{quadsing}). 
A simple dimension count shows that $\overline{Y}\cap Y=\Sigma$ and~$\overline{Y}\cap W_j=\Sigma$ scheme-theoretically.

We now apply Proposition \ref{pro4} to $\Sigma\subset \overline{Y}\subset X$, with $Z$
and $Y$ added to the collection of the $W_j$.  In this manner,  after repeatedly blowing up smooth cbs (with full intersection with the relevant subvarieties) and replacing all the varieties in sight by their strict transforms, we can arrange that $\Sigma$ is a union of connected components of a smooth cbs~$C_{\Sigma}\subset X$ which is included in~$\overline{Y}$.  

The equality $C=Y\cap H$ still holds after this sequence of blow-ups.  The subvariety~$Y\subset X$ is still a cbs (by Lemma~\ref{encorecbs} (i)), and hence so is $C=Y\cap H\subset X$. Moreover,  the variety $C$ still has ordinary quadratic singularities along~$\Sigma$ (by Lemma \ref{encorequad}).

Local computations show that the scheme-theoretic equalities $\overline{Y}\cap Y=\Sigma$ and ${\overline{Y}\cap W_j=\Sigma}$ are preserved during the blow-up process.
 The cbs $C_{\Sigma}$ can therefore only intersect $Z$,  $Y$ and the $W_j$ along $\Sigma$ and hence has full intersection with them. It is therefore legitimate to blow it up.  The strict transform of $C$ in this blow-up is a smooth cbs (by Lemma \ref{encorecbs} (ii)) which has the properties required in the statement of Proposition \ref{pro5}.

\subsection{Useful lemmas}
\label{parlemmas}

We gather here a few lemmas that were used in the proof of Proposition \ref{pro3}. We continue to let $X$ denote a smooth projective variety of dimension $n$ over $k$.

\subsubsection{Ordinary quadratic singularities}

The next two lemmas are standard, and we omit their proofs. The first is a Bertini-type theorem appearing in \cite[Theorem 2.1, Corollary 2.5]{DH} (see also \cite[Lemma 3.13]{KV}). The second is a simple local computation for which we refer to \cite[Lemma 4.4]{KV}.

\begin{lem}
\label{quadsing}
Let~$Z\subset X$ be a smooth subvariety of dimension $d<n$. Let $\sL$ be a sufficiently ample line bundle on $X$. Let $H\in|\sL|$ be a general hypersurface containing~$Z$.

If $d<\frac{n}{2}$, then $H$ is smooth.  If $d\geq \frac{n}{2}$, then $H$ is singular along a smooth subvariety of dimension $2d-n$ of $Z$ and has ordinary quadratic singularities along this subvariety.
\end{lem}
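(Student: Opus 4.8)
The plan is to treat $H=\{s=0\}$, for a general section $s\in H^0(X,\sL\otimes\sI_Z)$, separately away from $Z$ and along $Z$. Away from $Z$, the sheaf $\sL\otimes\sI_Z$ is globally generated once $\sL$ is sufficiently ample, so its base locus is contained in $Z$; the classical Bertini theorem then guarantees that a general such $H$ is smooth outside $Z$. Since every section of $\sL\otimes\sI_Z$ vanishes on $Z$, we have $Z\subset H$, and the entire difficulty is to understand the singularities of $H$ at the points of $Z$.

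To this end, I would encode the first-order behaviour of $s$ along $Z$ through its $1$-jet. Set $c:=n-d$, so that the conormal bundle $N_{Z/X}^{\vee}=\sI_Z/\sI_Z^2$ has rank $c$. Reduction modulo $\sI_Z^2$ yields a restriction map
$$H^0(X,\sL\otimes\sI_Z)\longrightarrow H^0\big(Z,\,\sL|_Z\otimes N_{Z/X}^{\vee}\big),\qquad s\longmapsto\sigma.$$
Working in local coordinates $(x_1,\dots,x_c)$ cutting out $Z$ and writing $s=\sum_i a_ix_i$, one computes $ds_z=\sum_i a_i(z)\,dx_i$ at a point $z\in Z$, while $\sigma=(a_i|_Z)_i$; hence $H$ is singular at $z\in Z$ if and only if $\sigma(z)=0$. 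For $\sL$ sufficiently ample, Serre vanishing gives $H^1(X,\sL\otimes\sI_Z^2)=0$, so the map above is surjective, and $\sL|_Z\otimes N_{Z/X}^{\vee}$ is globally generated. Thus, for general $s$, the section $\sigma$ is a general global section of a globally generated rank-$c$ vector bundle on the smooth $d$-dimensional variety $Z$; its zero locus $\Sigma$ is smooth, of the expected codimension $c$, and $\sigma$ vanishes transversally along it. When $d<\frac{n}{2}$ one has $c>d$, so $\Sigma=\varnothing$ and $H$ is smooth; when $d\geq\frac{n}{2}$, the locus $\Sigma$ is smooth of dimension $d-c=2d-n$ and equals $\mathrm{Sing}(H)$.

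The main obstacle is the last assertion in the case $d\geq\frac{n}{2}$: that the singularities of $H$ along $\Sigma$ are ordinary quadratic. I would establish this by computing the Hessian of $s$ transversally to $\Sigma$. Decompose $T_zZ$ as $T_z\Sigma\oplus Q$ with $\dim Q=c$; the normal space to $\Sigma$ in $X$ is then the $2c$-dimensional space spanned by the normal directions $\partial_{x_i}$ and by $Q$. Differentiating $s=\sum_i a_ix_i$ twice at $z\in\Sigma$, where $a_i(z)=0$, one finds that the transverse Hessian takes the block form
$$\begin{pmatrix} B & A^{\top}\\ A & 0\end{pmatrix},$$
with $B$ a symmetric $c\times c$ matrix and $A$ the $c\times c$ Jacobian of $\sigma$ in the directions of $Q$. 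Since the lower-right block vanishes, this quadratic form has determinant $\pm(\det A)^2$, hence is nondegenerate if and only if $A$ is invertible; and $A$ is invertible precisely because $\sigma$ vanishes transversally along $\Sigma$. The transverse Hessian is therefore a nondegenerate quadratic form in $2c=2(n-d)$ variables, which is exactly the statement that $H$ has an ordinary quadratic singularity transverse to $\Sigma$. The delicate point throughout is that \emph{no} control on the second-order jet of $s$ is required: the nondegeneracy of the transverse Hessian is forced by the $A$-block alone, hence by the transversality of $\sigma$, which is generic.
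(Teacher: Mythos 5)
The paper does not actually prove this lemma: it is quoted as standard, with references to \cite[Theorem 2.1, Corollary 2.5]{DH} and \cite[Lemma 3.13]{KV}, so there is no in-text proof to compare yours against. That said, your argument is correct and is essentially the standard one behind those references: identify $\mathrm{Sing}(H)\cap Z$ with the zero locus $\Sigma$ of the image $\sigma$ of $s$ under the jet map $H^0(X,\sL\otimes\sI_Z)\to H^0\bigl(Z,\sL|_Z\otimes N_{Z/X}^{\vee}\bigr)$, which is surjective by Serre vanishing; apply Bertini to the general section $\sigma$ of the globally generated bundle $\sL|_Z\otimes N_{Z/X}^{\vee}$; and observe that the transverse Hessian $\bigl(\begin{smallmatrix} B & A^{\top}\\ A & 0\end{smallmatrix}\bigr)$ is nondegenerate as soon as $A$ is, so that no second-order genericity of $s$ is required. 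Two refinements would make the proof complete. First, when $d\geq\frac{n}{2}$ the lemma asserts that $H$ \emph{is} singular, so you must check $\Sigma\neq\varnothing$: this holds because for $\sL$ sufficiently ample the top Chern class $c_{n-d}\bigl(\sL|_Z\otimes N_{Z/X}^{\vee}\bigr)$ has positive degree (its leading term is $c_1(\sL|_Z)^{n-d}$), whereas a rank-$(n-d)$ bundle admitting a nowhere-vanishing section has vanishing top Chern class. Second, to match the usual definition of ordinary quadratic singularities (a local normal form for $H$ along $\Sigma$), note that $T_z\Sigma$ lies in the kernel of the full Hessian of $s$ (differentiate the identity $ds|_{\Sigma}=0$ along $\Sigma$), so the Hessian has constant rank $2(n-d)$ along $\Sigma$, and the Morse lemma with parameters, valid in characteristic $0$, upgrades your pointwise transverse nondegeneracy to the required normal form.
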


\begin{lem}
\label{encorequad}
Let $Y\subset X$ be a subvariety with smooth singular locus $\Sigma\subset Y$, which has ordinary quadratic singular along $\Sigma$. Let $Z\subset \Sigma$ be a smooth subvariety of dimension~${<\dim(\Sigma)}$. Let $\pi:X'\to X$ be the blow-up of $\Sigma$. Then the strict transform~$Y'\subset X'$ of~$Y$ has ordinary quadratic singularities along the strict transform $\Sigma'\subset X'$ of $\Sigma$.
\end{lem}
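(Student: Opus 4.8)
The statement is étale-local along $\Sigma$, so I would first reduce it to a computation in suitable local coordinates near $\Sigma$. Since $\pi$ is an isomorphism over $X\setminus Z$ and carries $\Sigma\setminus Z$ isomorphically onto the part of $\Sigma'$ away from the exceptional divisor, it suffices to treat points lying over $Z$. Near such a point, choose coordinates $(u_1,\dots,u_a,w_1,\dots,w_b,x_0,\dots,x_s)$ on $X$ adapted to the flag $Z\subset\Sigma\subset X$, so that $\Sigma=\{x_0=\dots=x_s=0\}$ and $Z=\{w_1=\dots=w_b=0,\,x_0=\dots=x_s=0\}$; the hypothesis $\dim(Z)<\dim(\Sigma)$ means $b\geq 1$. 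By definition of an ordinary quadratic singularity along $\Sigma$, after shrinking one may assume $Y=\{q=0\}$, where $q=q(x_0,\dots,x_s)$ is a quadratic form in the transverse variables, nondegenerate at each point of $\Sigma$, whose coefficients are functions of $(u,w)$. (When $Y$ has codimension $c>1$ one writes $Y=\{t_1=\dots=t_{c-1}=0,\,q=0\}$ inside a smooth $M=\{t=0\}\supset\Sigma$; the extra linear equations are transverse to $Z$ and follow the same pattern, so I suppress them below.)

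The core of the proof is then the explicit computation of the strict transform $Y'$ of $Y$ in the two types of affine charts covering $\Bl_Z X$. In a chart where some $w_j$---say $w_1$---is the pivoting coordinate, one substitutes $w_k=w_1w_k'$ and $x_i=w_1x_i'$; homogeneity of $q$ gives $q=w_1^2\,q(x_0',\dots,x_s')$, so $Y'=\{q(x_0',\dots,x_s')=0\}$, again a nondegenerate quadric cone in the transverse variables $x'$, singular precisely along $\{x_0'=\dots=x_s'=0\}=\Sigma'$ (the strict transform of $\Sigma$, i.e.\ $\Bl_Z\Sigma$). Nondegeneracy persists near the central fibre since it is an open condition. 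In a chart where some $x_i$---say $x_0$---pivots, one has $x_i=x_0x_i'$, whence $q=x_0^2\,q(1,x_1',\dots,x_s')$ and $Y'=\{q(1,x')=0\}$; here $\Sigma'$ does not meet the chart, because the projectivised normal bundle $\bP(N_{Z/\Sigma})$ involves only the $w$-directions. It remains to check that $Y'$ is smooth in this chart: a singular point would satisfy $\partial q/\partial x_i=0$ for $i\geq 1$ together with $q(1,x')=0$, and Euler's relation $2q=\sum_i x_i\,\partial q/\partial x_i$ then forces $\partial q/\partial x_0=0$ as well, so that all partials of $q$ vanish, contradicting nondegeneracy away from the vertex. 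Hence $Y'$ is smooth off $\Sigma'$ and has an ordinary quadratic singularity along $\Sigma'$, as desired.

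The main obstacle is precisely this $x$-chart smoothness verification, which is where the nondegeneracy of $q$ is genuinely used through Euler's relation; the $w$-charts, by contrast, amount to the observation that a cone blown up transversally to its vertex stays the same cone. Two further points require care. First, one must ensure that the coordinates realising the ordinary quadratic singularity can be chosen compatibly with the flag $Z\subset\Sigma$; this is possible because $Z\subset\Sigma$ is a smooth pair and $q$ involves only the transverse variables, so refining the coordinates along $\Sigma$ to split off $Z$ leaves the form of $Y$ untouched. Second, the hypothesis $\dim(Z)<\dim(\Sigma)$ (that is, $b\geq1$) is what guarantees that $\Sigma'=\Bl_Z\Sigma$ is a genuine nonempty locus met by the $w$-charts, so that the conclusion about $Y'$ along $\Sigma'$ is non-vacuous.
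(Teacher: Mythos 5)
Your proof is correct, and it is essentially the "simple local computation" that the paper itself omits and delegates to \cite[Lemma 4.4]{KV}: coordinates adapted to the flag $Z\subset\Sigma\subset X$, the reduction to the hypersurface case, the two kinds of blow-up charts, and the Euler-relation argument showing smoothness of $Y'$ in the $x$-charts (which correctly uses only the $x'$-partials, so the dependence of the coefficients of $q$ on $x_0$ through $w=x_0w'$ causes no trouble). Note also that you rightly read the blow-up center as $Z$ rather than as $\Sigma$: the statement's "blow-up of $\Sigma$" is a slip, since otherwise $Z$ would play no role and the strict transform $\Sigma'$ of the center would be empty.
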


\subsubsection{Blowups of complete bundle sections}

The following lemma (for which see \mbox{\cite[Lemmas  4.4 and 4.5]{KV}}) is crucial to the mechanism of proof of Proposition~\ref{pro3}. In particular,  Lemma \ref{encorecbs} (ii) is the reason why it is necessary to blow up smooth cbs and not only smooth complete intersections in Proposition \ref{pro3} (and hence in Proposition~\ref{pro2}).

\begin{lem}
\label{encorecbs}
 Let $C\subset X$ be a cbs.  Let $\pi:X'\to X$ be the blow-up of a smooth subvariety $Z\subset C$ of dimension $<\dim(C)$. 
Let $C'\subset X'$ be the strict transform of $C$.

(i) If $C$ is smooth, then $C'\subset X'$ is a smooth cbs.

(ii) If $Z$ is the singular locus of $C$ and $C$ has ordinary quadratic singularities along~$Z$, then $C'\subset X'$ is a smooth cbs.
\end{lem}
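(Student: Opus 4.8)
The plan is to treat both parts through a single device: to exhibit the strict transform $C'$ as the zero locus of a \emph{regular} (i.e.\ everywhere transverse) section $s'$ of a vector bundle $\sE'$ of rank $c=\operatorname{codim}(C)$ on $X'$, obtained from the data $(\sE,s)$ defining $C=\{s=0\}$ by an elementary modification along the exceptional divisor $E\subset X'$ of $\pi$. Concretely, I would pull back the defining section to $\pi^*s\in H^0(X',\pi^*\sE)$; since $Z\subset C=\{s=0\}$, the section $\pi^*s$ vanishes along $E=\pi^{-1}(Z)$, so it can be divided by suitable powers of the canonical section $t_E\in H^0(X',\sO_{X'}(E))$ of the exceptional divisor. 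The remaining quotient will be $s'$, the bundle $\sE'$ being the corresponding twist of $\pi^*\sE$ by powers of $\sO_{X'}(-E)$. The two parts differ only in \emph{how many} powers of $t_E$ must be removed, and this is governed by the order of vanishing of the components of $s$ along $Z$.

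For part (i), since $C$ is smooth, $s$ is a regular section of $\sE$, and each of its local components generates part of $\sI_Z$ and hence vanishes to order exactly $1$ along the smooth center $Z$. Thus $\pi^*s$ vanishes to order exactly $1$ along $E$, and I would set $\sE':=\pi^*\sE\otimes\sO_{X'}(-E)$ and let $s'\in H^0(X',\sE')$ be the unique section with $t_E\cdot s'=\pi^*s$. A local computation in a chart of the blow-up where a transverse coordinate of $Z$ (not one of the $c$ equations of $C$) is the leading coordinate shows that $\{s'=0\}$ is the strict transform $C'$ and that $s'$ is transverse there; more conceptually, $C'$ coincides with $\Bl_Z C$, the blow-up of the smooth variety $C$ along the smooth center $Z$, and is therefore smooth. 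This realizes $C'$ as a smooth cbs.

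For part (ii), I would first invoke the local normal form of an ordinary quadratic singularity along the smooth center $Z=\Sigma$: locally $\sE$ splits as $\sF\oplus\sL$ with $\operatorname{rk}(\sF)=c-1$ and $\sL$ a line bundle, and $s=(\sigma,h)$ where $\sigma$ cuts out a smooth cbs $Y\supset\Sigma$ (vanishing to order $1$ along $\Sigma$) and $h$ vanishes to order exactly $2$ along $\Sigma$ with \emph{nondegenerate} quadratic leading term. Intrinsically, the subbundle $\sF$ is the image of the well-defined map $\overline{ds}\colon N_{\Sigma/X}\to\sE|_{\Sigma}$, which has corank $1$, and $\sL$ is its cokernel line bundle; this is what lets the local modification glue to a global bundle (and in the situation where the lemma is applied one simply has $C=Y\cap H$, so the splitting is given outright). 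The non-uniform vanishing forces two different twists: $\pi^*\sigma=t_E\,\tilde\sigma$ and $\pi^*h=t_E^2\,\tilde h$, so I would set $\sE':=\pi^*\sF(-E)\oplus\pi^*\sL(-2E)$ and $s':=(\tilde\sigma,\tilde h)$. Working in a chart adapted to a coordinate appearing in the quadric, one finds $\tilde\sigma$ cutting out the smooth strict transform $Y'$ (by part (i) applied to $Y$) and $\tilde h|_E$ equal to the projectivized quadratic form; one then checks $\{s'=0\}=C'$ and that it is smooth.

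The main obstacle is precisely part (ii), and within it the two linked points: first, the components of $s$ vanish to \emph{different} orders along $E$ ($1$ in the $\sF$-directions, $2$ in the $\sL$-direction), so no single twist $\pi^*\sE(-mE)$ works and one is forced into the split elementary modification $\pi^*\sF(-E)\oplus\pi^*\sL(-2E)$; second, the smoothness of $C'$ along $E$ is exactly where the word \emph{ordinary} (i.e.\ the nondegeneracy of the quadratic form) is indispensable, since it guarantees that $\tilde h$ does not vanish identically on $E$ and that $\{\tilde h=0\}$ meets $Y'$ transversally in a smooth quadric of the expected dimension — a degenerate quadric would leave $C'$ singular along $E$. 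Checking that the intrinsic splitting via $\operatorname{im}(\overline{ds})$ makes $\sE'$ globally well-defined is the one genuinely non-formal verification, the remaining chart computations being routine.
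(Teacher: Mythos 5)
Part (i) of your proposal is the paper's argument verbatim (twist $\pi^*s$ by $\sO_{X'}(-E)$ and check locally that $C'=\{s'=0\}$). The gap is in part (ii), and it is exactly the point you yourself flag as ``the one genuinely non-formal verification'': your bundle $\sE'=\pi^*\sF(-E)\oplus\pi^*\sL(-2E)$ presupposes a decomposition $\sE\cong\sF\oplus\sL$ of vector bundles \emph{on $X$}, adapted to $s$, and neither of your two justifications produces one. The intrinsic construction via $\overline{ds}\colon N_{Z/X}\to\sE|_Z$ only yields data over $Z$: a corank-one subbundle $\mathrm{im}(\overline{ds})\subset\sE|_Z$ and the quotient line bundle $\sL=\mathrm{coker}(\overline{ds})$. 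The sequence $0\to\mathrm{im}(\overline{ds})\to\sE|_Z\to\sL\to 0$ need not split even over $Z$, and, more fundamentally, a splitting of $\sE|_Z$ is not a splitting of $\sE$ on $X$, which is what the expressions $\pi^*\sF$ and $\pi^*\sL$ require. Your fallback --- that in the application $C=Y\cap H$ the splitting ``is given outright'' --- also does not work: there $H$ is a general hypersurface through a smooth subvariety of dimension $<\frac{n}{2}$, hence is itself smooth by Lemma \ref{quadsing}; consequently, relative to the natural splitting $\sE_Y\oplus\sO_X(H)$, the component $h$ of $s$ vanishes only to order $1$ along $\Sigma$ (singularity of $Y\cap H$ at $x$ forces $dh_x|_{T_xY}=0$ but not $dh_x=0$), so $\mathrm{im}(ds_x)$ is the graph of a nonzero map $\sE_Y|_x\to\sO_X(H)|_x$ rather than the factor $\sE_Y|_x$, and your normal form $(\sigma,h)$ with $h$ of order $2$ is not achieved by this splitting.

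The paper closes precisely this gap by replacing the direct sum with an elementary modification, which requires no splitting at all: taking $\sL=\mathrm{coker}(ds)$ (canonically a line bundle on $Z$, by the corank-one property you correctly identified), it defines $\sF$ as the kernel of the composite surjection
$$\pi^*\sE(-E)\to(\pi^*\sE(-E))|_E=(\pi|_E)^*\sE|_Z(-E)\to(\pi|_E)^*\sL(-E),$$
and local computations in the quadratic normal form show that $\sF$ is locally free, that $s'\in H^0(X',\pi^*\sE(-E))$ lifts to a section $t\in H^0(X',\sF)$, and that $C'=\{t=0\}$. Whenever an adapted global splitting does exist, this $\sF$ coincides with your $\pi^*\sF(-E)\oplus\pi^*\sL(-2E)$, since the kernel of $\pi^*\sL(-E)\to(\pi|_E)^*\sL(-E)$ is $\pi^*\sL(-2E)$; so your local picture --- including the role of nondegeneracy of the quadric in the smoothness of $C'$ along $E$ --- is the right one, and what your proof is missing is this canonical, splitting-free gluing device, without which it does not apply to a general cbs.
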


\begin{proof}
In both cases, it is clear that $C'$ is smooth and we must check that it is a cbs.

Let $c$ be the codimension of $C$ in $X$.  Write $C=\{s=0\}$, where ${s\in H^0(X,\sE)}$ for some vector bundle $\sE$ of rank~$c$ on~$X$.  Let $E$ be the exceptional divisor of $\pi$.
As the section $\pi^*s\in H^0(X',\pi^*\sE)$ vanishes on $E$, it gives rise to a section~$s'\in H^0(X',\pi^*\sE(-E))$. 

In case (i), a local computation shows that $C'=\{s'=0\}$, which concludes the proof.

In case (ii), the inclusion $C'\subset\{s'=0\}$ is strict and one must modify $\pi^*\sE(-E)$ to reach the desired conclusion.
Differentiating the section $s$ along $Z$ gives rise to a morphism~${ds:N_{Z/X}\to \sE|_Z}$ of vector bundles of rank $c$ on $Z$.  
Since $C$ has hypersurface singularities along $Z$, the kernel of the linear map $(ds)_x$ has dimension $1$ for all $x\in Z$. We deduce that cokernel of $(ds)_x$ also has dimension $1$ for all $x\in Z$,  and hence that the cokernel of $ds$ is a line bundle $\sL$ on $Z$. Let $\sF$ be the kernel of the composition
$$\pi^*\sE(-E)\to (\pi^*\sE(-E))|_E=(\pi|_E)^*\sE|_Z (-E)\to (\pi|_E)^*\sL (-E)$$
of the restriction map and the quotient map.  Computations based on the shape of local equations for subvarieties with ordinary quadratic singularities then show that $\sF$ is a vector bundle on $X'$, that the image of $s'\in H^0(X',\pi^*\sE(-E))$ in $H^0(X', (\pi|_E)^*\sL (-E))$ vanishes (so $s'$ lifts to a section $t\in H^0(X',\sF)$), and that $C'=\{t=0\}$. 
\end{proof}

\section{Half-dimensional cycles that are not smoothable}
\label{s4}

In this last section, we illustrate the sharpness of the Koll\'ar--Voisin technique by explaining the ideas of the proof of the following theorem. Recall that $\alpha(m)$ denotes the number of ones in the binary expansion of the integer $m$.

\begin{thm}
\label{thlci}
Let $d\geq 0$ be such that $\alpha(d+1)\geq 3$.  Then there exists a smooth projective variety $X$ of dimension $2d$ over $\C$ such that $\CH_d(X)$ is not generated by classes of lci subvarieties
of pure dimension $d$ in $X$.
\end{thm}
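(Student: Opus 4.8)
The plan is to exhibit an explicit obstruction to lci-smoothability and then produce a variety $X$ on which a given class violates it. The guiding principle, as indicated in \S\ref{count}, is the Whitney heuristic combined with a double point formula: a $d$-dimensional cycle on a $2d$-dimensional variety sits exactly at the boundary of the Whitney range, so a ``generic'' representative self-intersects in finitely many points, and the \emph{signed} count of these is a characteristic number computable from the class $\beta\in\CH_d(X)$ alone. First I would recall (following Thom's approach as adapted to the algebraic setting) that if $Z\subset X$ were smooth of pure dimension $d$, its normal bundle $N_{Z/X}$ would have rank $d$, and the top Chern class $c_d(N_{Z/X})$ would compute the self-intersection number $[Z]^2\in\Z$. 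The double point formula expresses this self-intersection in terms of $\beta=[Z]$ and Chern classes of $X$; the subtlety for lci (rather than smooth) subvarieties is that $N_{Z/X}$ must be replaced by the virtual normal bundle, and one must check the double point formula still computes an integer that depends only on $\beta$.

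The key numerical input is the hypothesis $\alpha(d+1)\geq 3$. Here I expect the argument to pass through mod-$2$ characteristic numbers: the self-intersection, reduced modulo a suitable prime (presumably $2$), should be forced to take a value incompatible with the class being a $\Z$-linear combination of lci subvarieties. The binary-digit condition $\alpha(d+1)\geq 3$ is precisely the kind of arithmetic constraint that governs the divisibility of binomial-type coefficients (by Kummer's theorem, the $2$-adic valuation of binomials is the number of carries in binary addition), so I would expect the relevant Steenrod-square or Wu-class computation to produce a nonzero mod-$2$ obstruction exactly when $\alpha(d+1)\geq 3$. Concretely, the second step is to construct $X$ — plausibly as a projective bundle, a product, or a blow-up engineered so that the tangent bundle $T_X$ has prescribed Chern/Stiefel--Whitney classes — together with a distinguished Hodge class $\beta\in\CH_d(X)$ whose double point number is odd (or otherwise nonzero mod $2$), while any single lci subvariety of pure dimension $d$ would be forced to have even double point number.

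The third step is to upgrade the statement from ``a single lci subvariety representing $\beta$ cannot exist'' to ``$\CH_d(X)$ is not generated by lci classes.'' For this I would work with the obstruction as a homomorphism: the double point invariant, read mod $2$, should define a group homomorphism $\CH_d(X)\to\Z/2$ (or a coset-valued map that is additive on the subgroup generated by lci classes), vanishing on every class represented by a pure-dimensional lci subvariety. Then exhibiting one Hodge class $\beta$ on which the invariant is nontrivial shows $\beta$ lies outside the subgroup generated by lci classes, giving the theorem. The additivity of a quadratic self-intersection invariant is not automatic, so one must linearize carefully — likely by passing to the associated mod-$2$ Stiefel--Whitney or Wu-class functional on cohomology, which \emph{is} linear, rather than to the self-intersection pairing itself.

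The main obstacle, I expect, will be the linearization in the last step together with the precise bookkeeping of the lci case. For smooth subvarieties the normal bundle is honest and Thom's criterion applies directly; for lci subvarieties one only has a conormal sheaf that is locally free of rank $c=d$, yielding a virtual normal bundle, and one must verify that the relevant Wu-class obstruction is still well-defined and still vanishes. Showing that the obstruction is insensitive to the (possibly singular) locus where the lci subvariety fails to be smooth, and that it extends to a genuine additive invariant on $\CH_d(X)$, is where the real work lies; the construction of $X$ and the extraction of the arithmetic condition $\alpha(d+1)\geq 3$ from a binomial valuation should, by contrast, be comparatively mechanical once the obstruction is in hand.
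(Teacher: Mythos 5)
Your skeleton matches the paper's in outline: the obstruction is indeed the double point formula $\deg([Z]^2)=\deg(c_d(N_{Z/X}))$ (valid for lci $Z$ with the virtual normal bundle, by \cite[Corollary 6.3]{Fulton}), combined with the Rees--Thomas divisibility theorem \cite{RT}, which is precisely where $\alpha(d+1)\geq 3$ enters. But both of the steps you yourself flag as ``the real work'' are resolved in the paper by ideas absent from your proposal, and your suggested substitutes would not succeed. First, the linearization. The obstruction is not mod $2$ but mod $4$. The paper's variety is a free $\Z/4$-quotient $X$ of $A\times A$ for a very general principally polarized abelian variety $A$ (not a projective bundle or blow-up: those could never satisfy the needed condition $c(T_X)=1$ in $\CH^*(X)\otimes_{\Z}\Q$, which is what converts $c_d(N_{Z_i/X})$ into $s_d(T_{Z_i})$). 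On this $X$ one proves by a Mumford--Tate group argument (Proposition \ref{propcons} (i)) that $\deg(\gamma\cdot\gamma')$ is \emph{even} for all integral Hodge classes $\gamma,\gamma'$; in particular every algebraic class has even self-intersection, so no mod-$2$ functional, Wu-class or otherwise, can detect anything on this variety. Instead, additivity is restored mod $4$: in $\deg(\beta^2)=\sum_i n_i^2\deg([Z_i]^2)+2\sum_{i<j}n_in_j\deg([Z_i]\cdot[Z_j])$, the diagonal terms are divisible by $4$ by Rees--Thomas (divisibility by $4$, not merely by $2$, is what $\alpha(d+1)\geq 3$ buys), and the cross terms are divisible by $4$ because of the evenness property (i). The contradiction then comes from arranging $\deg(\beta^2)\equiv 2\pmod 4$ (Proposition \ref{propcons} (ii)). Your plan contains no mechanism at all for controlling the cross terms $\deg([Z_i]\cdot[Z_j])$, and this is exactly the point where a specific construction with a Hodge-theoretic input is unavoidable.

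Second, the lci extension. You propose to show the obstruction is ``insensitive to the singular locus,'' but give no method, and a direct local analysis is not how this is done. The paper's solution is algebraic cobordism: by \cite{LM}, every lci projective variety $Z$ of dimension $d$ over $\C$ has a class $[Z]\in\Omega_d$, the assignment $[Z]\mapsto \deg(s_d(T_Z))$ (with $T_Z$ the virtual tangent bundle) extends to a group homomorphism $s_d:\Omega_d\to\Z$, and $\Omega_d$ is generated by classes of \emph{smooth} projective varieties. Hence the Rees--Thomas divisibility by $4$, known for smooth $Z$, propagates formally to all lci $Z$. Without this (or an equivalent) ingredient, your argument does not reach the lci case, which is the whole point of Theorem \ref{thlci} as opposed to Theorem \ref{thsmooth}.
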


The varieties used in the proof of Theorem \ref{thlci} are provided by the next proposition (see {\cite[Proposition 4.13]{Ben}}).

\begin{pro}
\label{propcons}
For all $d\geq 1$, there exist a connected smooth projective variety~$X$ of dimension $2d$ over $\C$  and a class $\beta\in \CH_d(X)$ such that:
\begin{enumerate}[(i)]
\item If $\gamma,\gamma'\in H^{2d}(X(\C),\Z)$ are Hodge, then $\degr(\gamma\cdot\gamma')$ is even.
\item One has $\degr(\beta^2)\equiv 2\pmod 4$.
\item All the higher Chern classes of $X$ are torsion, that is $c(T_X)=1$ in $\CH^*(X)\otimes_{\Z}\Q$.
\end{enumerate}
\end{pro}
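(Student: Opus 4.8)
The plan is to satisfy (iii) by taking $X$ to be an abelian variety: its tangent bundle is trivial, so $c(T_X)=1$ holds already in $\CH^*(X)$. This is in fact the natural arena, since the vanishing of all higher Chern classes in $\CH^*(X)\otimes_{\Z}\Q$ forces $X$ (via the Beauville--Bogomolov decomposition) to be an \'etale quotient of an abelian variety; allowing such quotients $X=A/G$ will moreover give extra flexibility, because $\degr_X(\bar\gamma\cdot\bar\gamma')=\tfrac{1}{|G|}\degr_A(\pi^*\bar\gamma\cdot\pi^*\bar\gamma')$ can lower $2$-adic valuations and can kill classes that do not descend. With $X$ abelian, I would reformulate the two remaining conditions lattice-theoretically. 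Write $\Lambda\subset H^{2d}(X(\C),\Z)$ for the group of Hodge classes, equipped with the symmetric form $q(\gamma,\gamma')=\degr(\gamma\cdot\gamma')$. Then (i) says that $q$ takes values in $2\Z$, i.e.\ that $\tfrac12 q$ is integral, while (ii) asks for an \emph{algebraic} $\beta\in\Lambda$ with $\tfrac12 q(\beta,\beta)$ odd. In short, I must produce an abelian variety of dimension $2d$ whose middle Hodge lattice is even but not twice-even, the odd vector being algebraic.

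Whenever $\alpha(d)=1$ (in particular for $d$ a power of $2$) a clean model is available: take $X=\mathrm{Jac}(C)$ for a very general curve $C$ of genus $2d$ and let $\beta=[W_d]$ be the minimal class, of cohomology class $\theta^d/d!$. A very general Jacobian has Mumford--Tate group the full symplectic group, so $\Lambda$ is the rank-one lattice generated by $\theta^d/d!$, and Poincar\'e's formula gives $\degr(\beta^2)=\theta^{2d}/(d!)^2=\binom{2d}{d}$, while Kummer's theorem yields $v_2\big(\binom{2d}{d}\big)=\alpha(d)$. Thus (i) holds because $\alpha(d)\geq 1$, and (ii) holds exactly when $\alpha(d)=1$. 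This already settles infinitely many $d$ and isolates the difficulty.

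For general $d$ the same recipe fails for a structural reason that I expect to be the main obstacle. On the one hand, products must be avoided: if $X=A\times B$ with $\dim A=\dim B=d$, then $[\mathrm{pt}_A]\otimes 1$ and $1\otimes[\mathrm{pt}_B]$ are algebraic Hodge classes pairing to $1$, violating (i); and for products of elliptic curves or abelian surfaces one checks that, in the rare configurations where (i) does hold, every self-intersection is divisible by $4$. On the other hand, for a simple principally polarized abelian variety the Hodge lattice is again the rank-one lattice spanned by the line $\Q\theta^d$, and the $2$-adic valuation of the self-intersection of any algebraic class in that line equals $2d-\alpha(d)-2v_2(m)$ for some $m$, whose parity is that of $\alpha(d)$. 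Hence it can equal $1$ only when $\alpha(d)$ is odd, so cases such as $d=3$ (where $\alpha(3)=2$) are genuinely out of reach of polarization powers alone.

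The resolution I would pursue is therefore to enlarge, in a controlled way, the group of Hodge classes, by choosing $X$ to be a \emph{simple} abelian variety with prescribed extra endomorphisms (of CM or Weil type), or a suitable free quotient of one, so that $\Lambda$ has rank $>1$ and is computable from the endomorphism algebra while simplicity rules out the product classes that spoil (i). Concretely I would: (a) select the endomorphism data so that $(\Lambda,q)$ is an explicit even lattice that is not twice-even, e.g.\ admitting an orthogonal summand isometric to $\langle 2\rangle$; (b) verify, through an analysis of the Mumford--Tate group, that $\Lambda$ is \emph{all} of the Hodge classes, so that (i) can be checked on a fully known lattice; and (c) exhibit the odd vector as a genuinely \emph{algebraic} class. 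I expect step (c) to be the crux, since the relevant middle-dimensional Hodge classes (Weil classes and the like) are not known to be algebraic in general; I would overcome it by restricting to endomorphism types for which the algebraicity of the required class is known, or by forcing it geometrically, and by using the quotient construction above to tune the $2$-adic valuation to exactly $1$ without disturbing the evenness (i).
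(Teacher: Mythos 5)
Your reformulation of (i)--(ii) as a lattice condition (evenness of the intersection form on integral Hodge classes, plus an algebraic vector of self-intersection $\equiv 2 \pmod 4$) is sound, and your Jacobian construction is correct as far as it goes: for a very general Jacobian of dimension $2d$ the integral Hodge lattice in middle degree is $\Z\cdot\theta^d/d!$, the minimal class $[W_d]$ is algebraic, and $v_2\big(\binom{2d}{d}\big)=\alpha(d)$, so (i), (ii), (iii) all hold when $\alpha(d)=1$. But the Proposition asserts existence for \emph{all} $d\geq 1$, and your own parity analysis shows the rank-one picture cannot reach $\alpha(d)\neq 1$; note moreover that no $d$ with $\alpha(d)=1$ satisfies $\alpha(d+1)\geq 3$ (if $d=2^k$ then $\alpha(d+1)=2$), so the cases you actually prove include none of the cases needed for Theorem \ref{thlci}. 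For all remaining $d$ your argument is a program, not a proof, and its step (c) is a genuine gap: producing an \emph{algebraic} odd vector in a rank $>1$ Hodge lattice of Weil or CM type is an open instance of the Hodge conjecture, and you give no endomorphism datum for which (a) the lattice is even but not twice-even, (b) it is provably the full integral Hodge lattice, and (c) the odd vector is known to be algebraic, simultaneously. As it stands the proposal does not establish the statement.

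The paper's actual construction shows that your structural dismissal of products was premature, and that the quotient mechanism you mention only in passing is the whole point. Take $(A,\lambda)$ a very general principally polarized abelian variety of dimension $d$, $\tau$ a $2$-torsion point, and let $\Z/4$ act freely on $A\times A$ by $(x,y)\mapsto(y+\tau,x)$; set $X:=(A\times A)/(\Z/4)$ with quotient map $p$, and $\beta:=[p(A\times\{0\})]$. Algebraicity of $\beta$ is then free (it comes from a product factor, no Weil classes needed), and a direct computation gives $\deg(p^*\beta^2)=8$, hence $\deg(\beta^2)=2$, which is (ii). The odd pairings that worried you upstairs (e.g.\ $[A\times\{0\}]\cdot[\{0\}\times A]=1$) are killed precisely by the quotient: a Mumford--Tate argument identifies the Hodge classes on $A\times A$ (generated by the two polarizations and the Poincar\'e bundle), one computes a $\Z$-basis of the invariant lattice $\Xi=\Hdg^{2d}(A(\C)\times A(\C),\Z)^{\Z/2}$ on which the pairing is even, and one shows that pullbacks $p^*\gamma$ of integral Hodge classes on $X$ are divisible by $2$ in $\Xi$, whence $\deg(p^*\gamma\cdot p^*\gamma')\equiv 0\pmod 8$ and $\deg(\gamma\cdot\gamma')\equiv 0\pmod 2$, which is (i); (iii) follows since $p$ is \'etale and $T_{A\times A}$ is trivial. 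So the correct route keeps your quotient idea but applies it to a product of very general ppavs, where every needed class is visibly algebraic, rather than to simple abelian varieties with extra endomorphisms, where algebraicity is out of reach.
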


\begin{proof}
Let $(A,\lambda)$ be a very general principally polarized abelian variety of dimension~$d$ over $\C$ and let $\tau\in A(\C)$ be a $2$-torsion point.  Let $\Z/4$ act on $A\times A$ by means of the order $4$ automorphism $\phi:(x,y)\mapsto (y+\tau, x)$.  Set $X:=(A\times A)/(\Z/4)$ and let~$p:A\times A\to X$ be the quotient morphism.  In addition choose $\beta\in\CH_d(X)$ to be the class of $p(A\times\{0\})$ in $\CH_d(X)$.

Since $\Z/4$ acts freely on $A\times A$, the quotient morphism $p$ is \'etale.  As the tangent bundle of $A$ is trivial, one computes that $p^*c(T_X)=c(p^*T_X)=c(T_{A\times A})=1$. Apply~$p_*$ to that identity to show (iii).

The class $p^*\beta$ is represented by the $\Z/4$-orbit of~$A\times\{0\}$. A direct computation therefore shows that~$\deg(p^*\beta^2)=8.$ Since $\deg(p)=4$, one gets $\deg(\beta^2)=2$, proving~(ii).

It remains to prove (i).  The detailed argument being quite lengthy, we only explain its principle and refer to  \cite[\S 4.2]{Ben} for more details.   Since the principally polarized abelian variety $(A,\lambda)$ is very general,  a Mumford--Tate group argument (see~\mbox{\cite[Lemma~4.6]{Ben}}) shows that the algebra $\Hdg^{2*}(A(\C)\times A(\C),\Q)$ of $\Q$-Hodge classes on $A\times A$ is generated by the principal polarizations of both factors and by the class of the Poincar\'e bundle (where we identify the second factor to the dual of $A$ by means of~$\lambda$). Using this piece of information, one can compute a~$\Z$-basis of the group $\Xi:=\Hdg^{2d}(A(\C)\times A(\C),\Z)^{\Z/2}$ of~$\Z$-Hodge classes of degree $2d$ on $A\times A$ that are fixed by the involution exchanging the two factors (see \cite[Lemma 4.7 (ii)]{Ben}).  A direct computation on these basis elements shows that~$\deg(\delta\cdot\delta')\equiv 0\pmod 2$ for all $\delta,\delta'\in \Xi$ (see \cite[Lemma 4.9]{Ben}).  Now, if~$\gamma,\gamma'$ are elements of $\Hdg^{2d}(X(\C),\Z)$, one verifies that the elements $p^*\gamma$ and $p^*\gamma'$ of $\Xi$ are divisible by $2$ in $\Xi$ (see \cite[Lemma 4.7 (iii)]{Ben}). Applying the above congruence to $\delta:=\frac{p^*\gamma}{2}$ and $\delta':=\frac{p^*\gamma'}{2}$ yields $\deg(p^*\gamma\cdot p^*\gamma')\equiv 0\pmod 8$.
Using that $\deg(p)=4$, we get $\deg(\gamma\cdot\gamma')\equiv 0\pmod 2$.
\end{proof}

\begin{proof}[Proof of Theorem \ref{thlci}]
Let $X$ and $\beta$ be as in Proposition \ref{propcons}.  We assume that 
$$\beta=\sum_i n_i[Z_i]\textrm{\, in\, }\CH_d(X),$$
where $n_i\in\Z$ and the $Z_i$ are lci subvarieties of $X$ of dimension $d$, and we seek a contradiction.  Suppose first that the $Z_i$ are smooth. Then
\begin{equation}
\label{squared}
\deg(\beta^2)=\sum_i n_i^2\deg([Z_i]^2)+2\sum_{i<j}n_in_j\deg([Z_i]\cdot[Z_j]).
\end{equation}

The double point formula \cite[Corollary~6.3]{Fulton} shows that 
$\deg([Z_i]^2)=\deg(c_d(N_{Z_i/X}))$.
As $c(TX)=1$ in $\CH^*(X)\otimes_{\Z}\Q$ by Proposition \ref{propcons} (iii), we see that 
$$c(N_{Z_i/X})=c(T_{Z_i})^{-1}=s(T_{Z_i})$$ 
in $\CH^*(X)\otimes_{\Z}\Q$, where $s(T_{Z_i})$ denotes the total Segre class of $T_{Z_i}$.
A topological computation due to Rees and Thomas (see \mbox{\cite[Theorem 3]{RT}}) shows that, under the hypothesis that ${\alpha(d+1)\geq 3}$, the number $\deg(s_d(T_{Z_i}))$ is divisible by $4$. This implies that the term $\sum_i n_i^2\deg([Z_i]^2)=\sum_i n_i^2\deg(c_d(N_{Z_i/X}))=\sum_i n_i^2\deg(s_d(T_{Z_i}))$ in (\ref{squared}) is divisible by~$4$.

On the other hand, the numbers $\deg([Z_i]\cdot[Z_j])$ are even by Proposition~\ref{propcons}~(i) because the image of the cycle class map consists of Hodge classes. So the term $2\sum_{i<j}n_in_j\deg([Z_i]\cdot[Z_j])$ in (\ref{squared}) is also divisible by $4$.  We deduce that the left-hand side~$\deg(\beta^2)$ of~(\ref{squared}) must be divisible by $4$, which contradicts Proposition~\ref{propcons}~(ii).

The above proof can be adapted to the case where the $Z_i$ are only assumed to be lci. We now explain the required modifications. We let 
$N_{Z_i/X}$ and~$T_{Z_i}$ denote the normal bundle of the regular imbedding ${Z_i\hookrightarrow X}$ (see \mbox{\cite[B.7.1]{Fulton}}) and the virtual tangent bundle of $Z_i$ (see \mbox{\cite[\S 7.4.2]{LM}}) respectively. 
Fulton's double point formula \mbox{\cite[Corollary~6.3]{Fulton}} is stated in the required generality.  To justify that $\deg
(s_d(T_{Z_i}))$ is divisible by $4$ when~$Z_i$ is possibly not smooth, we make use of the algebraic cobordism group $\Omega_d$ defined in~\cite{LM}.  Every lci projective variety~$Z$ of dimension $d$ over $\C$ has a class $[Z]\in\Omega_d$ (see \mbox{\cite[\S 7.4.4]{LM}}).
By \mbox{\cite[Proposition 7.4.5]{LM}} (applied to the polynomial~$x_d$), there exists a group morphism~$s_d:\Omega_d\to \Z$ such that $s_d([Z])=\deg(s_d(T_Z))$ for any $Z$ as above. 
As the classes~$[Z]$ for $Z$ smooth generate $\Omega_d$ (see  \cite[Lemma~2.5.15]{LM}) and as~$s_d([Z])$ is divisible by $4$ when $Z$ is smooth by the Rees--Thomas theorem \mbox{\cite[Theorem 3]{RT}} (under the hypothesis that ${\alpha(d+1)\geq 3}$), we deduce, as desired, that~$s_d([Z])$ is also divisible by $4$ when $Z$ is only lci.
\end{proof}

\end{document}